\DeclareMathOperator*{\spn}{span}
\newcommand{\muv}{\boldsymbol{\mu}}
\newtheorem{lemma}{Lemma}[section]
\newtheorem{definition}{Definition}[section]
\theoremstyle{remark}
\title{PTPI-DL-ROMs: pre-trained physics-informed deep learning- based reduced order models for nonlinear parametrized PDEs}
\author{Simone Brivio, Stefania Fresca, Andrea Manzoni}
\date{}
\begin{document}

\maketitle

\begin{abstract}
    Among several recently proposed data-driven Reduced Order Models (ROMs), the coupling of Proper Orthogonal Decomposition (POD) and deep learning-based ROMs (DL-ROMs) has proved to be a successful strategy to construct non-intrusive, highly accurate, surrogates for the \textit{real time} solution of parametric nonlinear time-dependent PDEs. Inexpensive to evaluate, POD-DL-ROMs are also relatively fast to train, thanks to their limited complexity. However, POD-DL-ROMs account for the physical laws governing the problem at hand only through the training data, that are usually obtained through a full order model (FOM) relying on a high-fidelity discretization of the underlying equations. Moreover, the accuracy of POD-DL-ROMs strongly depends on the amount of available data. In this paper, we consider a major extension of POD-DL-ROMs by enforcing the fulfillment of the governing physical laws in the training process -- that is, by making them physics-informed  --  to compensate for possible scarce and/or unavailable data and improve the overall reliability. To do that, we first complement POD-DL-ROMs with a \textit{trunk net} architecture, endowing them with 
    the ability to compute the problem's solution at every point in the spatial domain, and ultimately enabling a seamless computation of the physics-based loss by means of the strong continuous formulation. Then, we introduce 
    an efficient training strategy that limits the notorious computational burden entailed by a physics-informed training phase. In particular, we take advantage of the few available data to develop a low-cost pre-training procedure; 
    then, we fine-tune the architecture in order to further improve the prediction reliability. 
    Accuracy and efficiency of the resulting pre-trained physics-informed DL-ROMs (PTPI-DL-ROMs) are then assessed on a set of test cases ranging from non-affinely parametrized advection-diffusion-reaction equations, to nonlinear problems like the Navier-Stokes equations for fluid flows.
\end{abstract}

\section{Introduction}

Mathematical models expressed in terms of nonlinear partial differential equations (PDEs) are ubiquitous in Applied Sciences and Engineering, enabling scientists to describe complex patterns or phenomena arising in a wealth of contexts, including Fluid Dynamics, Structural Mechanics, and Computational Biology, to mention a few examples. Despite providing accurate approximations to these problems, high-fidelity, full-order models (FOMs) are often computationally prohibitive. For instance, finite element-based FOMs usually entail spatial discretizations of the problem domain regulated by very small mesh sizes $h>0$, thus yielding algebraic problems whose dimension $N_h$ can easily reach millions of unknowns  scaling with the number of vertices $\{\mathbf{x}_i\}_{i=1}^{N_h}$. The computational burden is even higher in a parametric setting, where the problem's solution also depends on a vector $\muv \in \mathcal{P} \subset \mathbb{R}^{N_h}$ of input parameters, in addition to the spatial coordinates and, possibly, the time variable $t \in \mathcal{T} = [0,T]$ -- with $T>0$. In several situations, we wish to explore the entire solution manifold $\mathcal{S}_{N_h} = \{\mathbf{u}(\muv,t) = [u(\mathbf{x}_i,\muv,t)]_{i=1}^{N_h} \in \mathbb{R}^{N_h}: (\muv,t) \in \mathcal{P} \times \mathcal{T}\}$ to address either real-time simulations or multi-query problems (like, e.g., uncertainty quantification or parameter estimation) -- two tasks that are usually out of reach with high-fidelity FOMs. Reduced Order Models (ROMs) are usually employed to accomplish these tasks,  replacing the high-fidelity problem by one featuring a much lower numerical complexity \cite{benner2017model,benner2020model_1,benner2020model_2}. Among several available options, the reduced basis (RB) method \cite{QMN_16} exploits the $\boldsymbol{\mu}$-dependence of the solution manifold to generate a reduced subspace that approximates the FOM solution, and an algebraic system to solve through a (Galerkin-type) projection of the FOM onto the reduced subspace. Low-dimensional linear subspaces can be obtained, e.g., through proper orthogonal decomposition (POD). For parametrized PDEs, POD selects the most relevant modes by computing the singular value decomposition of a matrix collecting a set of FOM snapshots. However, despite the mathematical rigor of this idea, assembling the resulting ROMs might be an extremely intrusive task. Moreover, to ensure accuracy, the dimension of the ROM can increase remarkably, making computational benefits negligible in several applications involving nonlinear, time-dependent problems \cite{farhat2020}. Nonetheless, a dynamical system --  even if of small size -- has to be solved at the reduced level, in case of time-dependent problems. All these issues make classical ROM strategies unfeasible in view of real-time predictions. 

Recently, 
Deep Neural Networks (DNNs) 
have become extremely popular in reduced order modeling, aiming at {\em (i)} surrogating the expensive construction of ROMs by exploiting, e.g., feedforward NNs \cite{hesthaven2018,wang2019non,bhattacharya2021}, recurrent NNs \cite{wang2020recurrent} and ResNets architectures \cite{oleary2022}, or {\em (ii)} performing dimensionality reduction through, e.g., convolutional autoencoders \cite{gonzalez2018deep,san2019artificial,lee2020model}, 
or even {\em (iii)} 
coupling these two latter tasks, e.g., through  deep learning-based reduced order models (DL-ROMs) \cite{franco2023deep, fresca2021dlrom, mucke2021, pant2021}, among which we mention POD-DL-ROMs \cite{fresca2022poddlrom, fresca2021fluids, fresca2022microstructures,brivio2023error}, POD-LSTM-ROMs \cite{fresca2023long}, and fast SVD-ML-ROMs  \cite{drakoulas2023fastsvd} -- indeed very similar to the former. These latter strategies -- and, more generally speaking, data-driven ROMs -- are becoming more and more ubiquitous among ROM techniques, especially when dealing with nonlinear time-dependent parametrized PDEs, as they offer a striking accuracy, excellent generalization capabilities, and an extremely fast \textit{online} inference phase. However, the reliability of deep learning-based strategies strongly depends not only on the proper design of neural network architectures, but also on the amount of collected samples the neural network is fed with during the \textit{offline} training phase. In fact, snapshots' collection plays a major role in the \textit{offline} phase of DL-ROMs -- in addition to the training of DNN architectures. When the FOM is particularly hard to solve, 
the required computational time might also limit the amount of collected snapshots. On the other hand, it is well known that data scarcity represents a serious possible concern about the overall reliability of DNNs-based methods.  \\ 

Recent advances in Scientific Machine Learning (SciML) suggest to compensate small data regimes with an optimization phase driven by a physics-informed loss formulation, employing the \textit{a priori} knowledge of the physical model governing the problem at hand \cite{zhu2023reliable}. 
Indeed, physics-informed techniques -- such as, e.g., physics-informed neural networks \cite{raissi2019physics,karniadakis2021physics} or physics-informed deep operator networks \cite{wang2021learning} -- can foster the accuracy remarkably, even in contexts where data are entirely missing in a region of the time-parameter domain $\mathcal{P} \times \mathcal{T}$.
It is worth to notice that enforcing physics-based constraints has proven to be successful even when dealing with a sufficient amount of data, implying in these cases a remarkable improvement of the overall reliability of the considered method \cite{chen2021}. See, e.g., \cite{yang2021b,hernandez2021deep,hernandez2021structure} for more general approaches to build physics-aware, data-driven ROMs. 

Even though physics-based constraints shall enhance the training phase of almost any DL-ROM as long as the underlying physical model is available, in practice a suitable minimum in the physics-informed loss function is hard to achieve, thus requiring a large number of epochs in the neural network training \cite{wang2021learning,raissi2019physics}, or even causing convergence failure in the optimization phase \cite{daw2023mitigating}. Approaches exploiting an interplay between data and physics  \cite{chen2021} often reduce the risk of optimization failure and limit the number of epochs required to achieve a suitable accuracy. Nevertheless, the computation of a physics-informed loss term requires a substantial amount of computational time and resources per epoch: indeed, it usually relies on automatic differentiation, and entails residuals' evaluation -- that is, the repeated assembling of FOM arrays for parametrized problems -- with larger and larger difficulties as the order of the required derivatives and the amount of nonlinearities increases. Thus, it is essential to develop a framework for the residual computation which only mildly impacts on the required training time and resources.

Within this paper, we extend the proposed deep learning-based ROMs enhanced by POD (POD-DL-ROMs) \cite{fresca2022poddlrom,fresca2021fluids, fresca2022microstructures, brivio2023error} to make them physics-aware, thus formulating a pre-trained physics-informed DL-ROM (PTPI-DL-ROM) paradigm. Our main contributions are thus the following ones:
\begin{itemize}
    \item we design a physics-based loss functional that enforces the governing physical laws in the training process, thus compensating for a shortage of available supervised data;
    \item we devise a fast and efficient pre-training strategy that blends the few available data and the underlying physics to speed up the convergence to a suitable minima in the proposed loss function;
    \item we further enhance the proposed architecture through a fine-tuning strategy, which is also characterized from an abstract point of view.
\end{itemize}
Then, we showcase the potential of our approach through small data interpolation and extrapolation tasks. In particular, we emphasize the complexity of extrapolation tasks, where the available labeled data belong to a small region of the time-parameter domain $\mathcal{P} \times \mathcal{T}$, thus leaving the most part of this latter substantially uncovered by training data, and making the prediction task harder. Specifically, we demonstrate that a suitably trained PTPI-DL-ROM is endowed with excellent extrapolation capabilities, up to $400\%$ with respect to both the parameter range and the time interval from which supervised data are sampled.

The structure of the paper is as follows. 
In Section 2, we state the general formulation of the problem we deal with, and we specify the family of neural network architectures we intend to focus on. In Section 3 we provide a general framework on the integration of physics to compensate for the drawbacks entailed by small data regimes. In Section 4, we propose the PTPI-DL-ROM paradigm, which intertwines information extracted from the underlying physical problem and the contribution of the available labeled data. We emphasize that this novel paradigm is supplied with an efficient training algorithm consisting in a pre-training phase and a fine-tuning stage. Section 5 is then devoted to the assessment of the proposed framework through a series of numerical experiments, including also nonlinear and non-affine differential problems. Finally, the concluding Section 6 reports a brief discussion on possible further developments and some open issues.

\section{A general framework: low-rank DL-based architectures}
\label{sec:mathematical-overview}

Within this section we aim at presenting the theoretical framework this work relies on. Specifically, after introducing the general class of parametrized differential problems we focus on, we describe the architecture and the properties of low-rank deep learning based ROMs; among the latter, we distinguish spatially discretized (or mesh-based) approaches -- such as POD+DNN \cite{hesthaven2018}, POD-DL-ROMs \cite{fresca2022poddlrom}  and POD-DeepONets \cite{lu2022comprehensive} -- from spatially continuous (or mesh-agnostic) techniques -- such as DeepONets \cite{lu2021learning}, characterizing them in a data-driven context.

\subsection{Problem formulation}
We aim at treating generic nonlinear time-dependent parametric PDEs whose general formulation can be expressed as follows:
\begin{equation}
    \label{eq:general_formulation}
    \left\{
    \begin{aligned}
        \frac{\partial u}{\partial t} + \mathcal{A}(\muv,t)u(\muv,t) + \mathcal{N}(u(\muv,t),\muv,t) &= f(\muv,t) , &\mbox{in } \Omega \times (0,T] \\
        \mathcal{B}(\muv)u(\muv,t) &= g(\muv,t) , &\mbox{on } \partial{\Omega} \times (0,T]\\
        u(\muv,0) &= u_0(\muv), & \mbox{in } \Omega.
    \end{aligned}
    \right. 
\end{equation}
The problem is set in the spatial (compact and bounded) domain $\Omega \subset \mathbb{R}^d$, $d \geq 1$. The solution $u = u(\mathbf{x},\muv,t)$ at any point $\mathbf{x} \in \Omega$ depends on both the input parameter vector $\muv \in \mathcal{P} \subset \mathbb{R}^{p}$, with $\mathcal{P}$ compact, and the time variable $t \in \mathcal{T} = [0,T]$, for some $T > 0$. In the formulation \eqref{eq:general_formulation}, $\mathcal{A}$ represents a linear (second-order, elliptic) operator, $\mathcal{N}$ denotes a nonlinear operator, while  $\mathcal{B}$ is a generic boundary operator, encoding e.g. the trace of a function on $\partial \Omega$ or the normal flux, or their linear combination, to represent Dirichlet, Neumann, or Robin conditions, respectively; finally, $u_0 = u_0(\muv)$ denotes the initial datum.

Once discretized through the finite element method (FEM) on a mesh with step size $h>0$, problem \eqref{eq:general_formulation} yields a nonlinear dynamical system of size $N_{FOM}$ -- this latter might coincide with the number $N_h$ of vertices $\{\mathbf{x}_i\}_{i=1}^{N_h}$ appearing in the finite element discretization mesh \cite{quarteroni2017}, even if thi choice is not restrictive. We thus define $V_h = \spn(\{\psi_i\}_{i=1}^{N_{FOM}})$ as the finite dimensional subspace involved in the problem discretization, where $\psi_1, \ldots, \psi_{N_{FOM}}$ denote the finite element basis functions. Note that also the choice of the FEM as high-fidelity technique is not restrictive,  other options being also possible, such as, e.g., the Spectral Element Method, or the Finite Volume Methods.  Without loss of generality, the general formulation of the high-fidelity FOM problem reads as
\begin{equation}
    \label{eq:general_formulation_FOM}
    \left\{
    \begin{aligned}
        {\bf M}(\muv) \frac{\partial {\bf u}_{FOM}}{\partial t}(\muv,t) + {\bf A}(\muv,t){\bf u}_{FOM} (\muv,t) + {\bf N}({\bf u}_{FOM}(\muv,t),\muv,t) &= {\bf f}(\muv,t), & t \in (0,T] \\
        {\bf u}_{FOM}(\muv, 0) &= {\bf u}_{0}(\muv), & 
    \end{aligned}
    \right. 
\end{equation}
where ${\bf M}(\muv) \in \mathbb{R}^{N_{FOM} \times N_{FOM}}$ denotes the parameter-dependent (symmetric, positive definite) mass matrix, while the (possibly, time-dependent) stiffness matrix ${\bf A}(\muv, t) \in \mathbb{R}^{N_{FOM} \times N_{FOM}}$ and the nonlinear term ${\bf N}(\cdot, \muv, t) : \mathbb{R}^{N_{FOM}} \rightarrow \mathbb{R}^{N_{FOM}}$ are the discrete counterparts of the linear operator $\mathcal{A}$ 
and of the nonlinear operator $\mathcal{N}$, respectively. The remaining right hand side contributions are encompassed by $\mathbf{f} := \mathbf{f}(\muv,t) \in \mathbb{R}^{N_{FOM}}$, while ${\bf u}_{0}(\muv)$ denotes the initial datum.  We remark that the FOM discrete solution $\mathbf{u}_{FOM}(\muv, t) = \{u_{FOM,i} (\muv, t) \}_{i=1}^{N_{FOM}}$ can be directly linked to the function approximating the PDE solution, that is, $u(\mathbf{x}, \muv, t) \approx u_{FOM}(\mathbf{x}, \muv, t) = \sum_{i =1}^{N_{FOM}} u_{FOM,i}(\muv, t) \psi_i(\mathbf{x})$. 

We emphasize that we normally collect the evaluation of the high-fidelity solution at the mesh vertices, namely
$\mathbf{u}_h(\muv,t) = \{u_{FOM}(\mathbf{x}_i, \muv, t)\}_{i=1}^{N_h}$, or we choose $\mathbf{u}_h(\muv,t) = \mathbf{u}_{FOM}(\muv,t)$; for the sake of readability, hereon we consider $N_{FOM} = N_h$, without loss of generality. 
Then, we employ the high-fidelity solver to collect the labeled training dataset for varying values of $(\muv,t) \in \mathcal{P}_{sup} \times \mathcal{T}_{sup} \subseteq \mathcal{P} \times \mathcal{T}$ ($\mathcal{P}_{sup},\mathcal{T}_{sup}$ compacts). Formally:
\begin{itemize}
    \item we select a finite set of time-parameter values through a sampling strategy, namely,
        \begin{equation*}
            \mathsf{P}_{sup} = \{\mu_j \in \mathcal{P}_{sup}\}_{j=1}^{N_s}, \qquad  \mathsf{T}_{sup} = \{k|\mathcal{T}_{sup}|/N_t\}_{k=1}^{N_t};
        \end{equation*}
    \item then, we generate a set of snapshots with the high-fidelity solver \eqref{eq:general_formulation_FOM}, thus obtaining the supervised (labeled) training dataset
    \begin{equation*}
         \mathsf{D}_{sup} = \{(\muv,t, \mathbf{u}_h(\muv,t)) \mbox{, for any } (\muv,t) \in \mathsf{P}_{sup} \times \mathsf{T}_{sup} \}.
    \end{equation*}
\end{itemize}
We stress that the computational burden entailed by the FOM solver might become unaffordable in the case of complex problems, so that we might be able to collect only a handful of data. Within the present work, we focus on two different sampling strategies for the synthetic data generation phase -- namely, we assume either {\em (i)} to sample scattered data in the entire time-parameter domain, thus leaving large portions of it substantially uncovered by training data, or {\em (ii)} to draw solution snapshots from a small region of the entire time-parameter space, that is $\mathcal{P}_{sup} \times \mathcal{T}_{sup} \subsetneq \mathcal{P} \times \mathcal{T}$, where $|\mathcal{P}_{sup} \times \mathcal{T}_{sup}| \ll |\mathcal{P} \times \mathcal{T}|$. 
We employ the same procedure to collect a suitably representative test dataset to evaluate the performance of the ROM strategy: thus, we set $\mathcal{P}_{test} \times \mathcal{T}_{test} \equiv \mathcal{P} \times \mathcal{T}$ and generate $\mathsf{D}_{test}$, corresponding to the test instances $\mathsf{P}_{test} \times \mathsf{T}_{test} \subset \mathcal{P}_{test} \times \mathcal{T}_{test}$. We remark that the test dataset is needed only \textit{a posteriori} to test the ROM accuracy, assuming to manage to query the computationally expensive high-fidelity solver a sufficiently large number of times ($|\mathsf{D}_{test}| \gg |\mathsf{D}_{sup}|$).

\subsection{Neural network architectures and low-rank decompositions}
\label{sec:nn-architectures}

Within the present work we consider an operator learning task that involves the approximation of mappings between (possibly infinite-dimensional) function spaces \cite{kovachi2023neural}. Specifically, assuming that $\mathcal{X}$ is a compact set in either $\mathbb{N}$ or $\mathbb{R}^d$, our purpose is to approximate the parametric mapping $u: \mathcal{P} \times \mathcal{T} \rightarrow L^2(\mathcal{X})$. In the following, we focus in particular on low-rank deep learning-based approaches as they feature a powerful combination of linear dimensionality reduction (entailed by a low-rank decomposition) and nonlinear approximation  capabilities due to a neural network (NN) core \cite{goodfellow2016deep,han2018solving}: indeed, they have been widely used for learning parametric operators even in real-world applications involving, e.g., chemical combustion \cite{kumar2024combustion}, seismology \cite{haghighat2024endeeponet}, electrophysiology \cite{fresca2021electrophys} and micro-mechanical systems \cite{fresca2022microstructures}. In the following, we formally define low-rank deep learning-based architectures.

\begin{definition}
\label{def:low-rank-deep-learning-based-approch}
Let $\mathcal{X}$ be a compact set of either $\mathbb{N}$ or $\mathbb{R}^d$ and denote by $\mathcal{P} \times \mathcal{T} \subset \mathbb{R}^{p+1}$ the time-parameter domain. Assume that $u: \mathcal{P} \times \mathcal{T} \rightarrow L^2(\mathcal{X})$ is the parametric map to be approximated, and that $\{\hat{v}_k\}_{k=1}^{N}: \mathcal{X} \rightarrow \mathbb{R}^N$ is a collection of $N < + \infty$ (possibly learnable) basis functions spanning a low-dimensional subspace of $L^2(\mathcal{X})$. We call {\em low-rank deep learning-based architecture} any NN architecture 
\[
\hat{u}: \mathcal{P} \times \mathcal{T} \rightarrow L^2(\mathcal{X}), \qquad  \hat{u}(\muv,t): = \sum_{k=1}^N \hat{v}_k \hat{q}_k(\muv,t) \approx u(\muv,t),
\]
where the branch network $\hat{\mathbf{q}} = \{\hat{q}_k\}_{k=1}^N: \mathcal{P} \times \mathcal{T} \rightarrow \mathbb{R}^N$ is any (deep) feedforward neural network. 
\end{definition}

This definition allows us to cast -- apparently different -- strategies like, e.g., space-discrete POD+DNN architectures (like, e.g., POD-DL-ROMs), or space-continuous DeepONets in a common framework, ultimately combining them to devise a new, physics-informed POD-DL-ROM strategy. Indeed, we emphasize that in our framework the distinction between space-discrete and space-continuous approaches resides in the choice of the compact set $\mathcal{X}$, as we are going to detail in the following.

\subsubsection*{Space-discrete DL-based architecture: POD+DNN}

If we suppose that $\mathcal{X} \equiv \{1,\ldots,N_h\}$, then $\hat{v}_k : \mathcal{X}  \rightarrow \mathbb{R}^N$ is usually represented using a matrix $\mathbf{V} \in \mathbb{R}^{N_h \times N}$ defined as $\mathbf{V}_{ik} = \hat{v}_k(i)$, $k=1,\ldots,N$, $i=1,\ldots,N_h$. Thus, we obtain 
\[
\mathbb{R}^{N_h} \ni \hat{\mathbf{u}}_h(\muv,t) = \mathbf{V}\hat{\mathbf{q}}(\muv,t) \approx \mathbf{u}_h(\muv,t)
\]
and, since the present approach is associated to the numerical discretization, we deem it {\em space-discrete} or {\em mesh-based}. We stress that the peculiar lightweight training of {\em space-discrete} approaches is due to a remarkable reduction of the computational burden of the optimization phase: indeed, the global spatial basis functions collected in $\mathbf{V}$ are pre-computed before the training of the branch neural network $\hat{\mathbf{q}}$. This usually ensures lower backpropagation costs and fosters stability during the optimization phase. 

Hereon we focus on POD-based strategies \cite{willcox2002balanced, QMN_16, farhat2020, buithanh2003proper}, thus delving into the POD+DNNs family of architectures \cite{hesthaven2018}; however, we remark that in the literature there are some alternatives, see e.g. \cite{oleary2022}. For the sake of brevity, we do not delve deeper into density results and error estimates involving low-rank deep learning-based architectures: the interested reader can refer to, e.g., \cite{lanthaler2022error,brivio2023error}. Building a POD+DNN architecture entails the following steps: 
\begin{itemize}
    \item [(i)] from the supervised training dataset $\mathsf{D}_{sup}$, we extract the snapshots matrix as follows 
    \begin{equation*}
        \mathbf{U} \in \mathbb{R}^{N_h \times N_sN_t} \mbox{ s.t. } \mathbf{U}[:,(k+(j-1)N_s)] = \mathbf{u}_h(\muv_j,t_k) \mbox{ for } j=1,\ldots,N_s \mbox{ and } k = 1,\ldots,N_t;
    \end{equation*}
    \item [(ii)] we perform a preliminary dimensionality reduction aiming at compressing the available data, heuristically assuming that the parametric dependence and the dynamics can be resolved by few coordinates. In this respect, we  employ singular value decomposition (SVD) \cite{eckart1936} -- or its randomized version \cite{szlam2014} -- and retain the first $N \le N_h$ modes. Formally, we seek to factorize the snapshot matrix $\mathbf{U} \approx \mathbf{V} \bm{\Sigma} \mathbf{W}^T$, where $\mathbf{V} \in \mathbb{R}^{N_h \times N}$ and $\mathbf{W} \in \mathbb{R}^{N \times N_{data}}$ are two semi-orthogonal matrices, while $\bm{\Sigma} = \textnormal{diag}(\{\sigma_k\}_{k=1}^N) \in \mathbb{R}^{N \times N}$ is a diagonal matrix whose entries $\sigma_k$, for $k=1,\ldots,N$, are referred to as the singular values. It is well known \cite{QMN_16,eckart1936,chen2021} that (thin) SVD entails the best $N$-rank linear decomposition, in the sense that,
    \begin{equation*}
        \mathbf{V} = \arg \min_{\mathbf{W} \in \mathbb{R}^{N_h \times N}: \mathbf{W}^T\mathbf{W} = \mathbf{I}} \frac{|\mathcal{P}_{sup} \times \mathcal{T}_{sup}|}{N_s N_t} \sum_{j=1}^{N_s} \sum_{k=1}^{N_t} \|\mathbf{u}_h(\muv,j,t_k) - \mathbf{W}\mathbf{W}^T\mathbf{u}_h(\muv_j,t_k)\|^2;
    \end{equation*}
    moreover, the (squared) error entailed by the preliminary POD dimensionality reduction amounts to
    \begin{equation*}
        \frac{|\mathcal{P}_{sup} \times \mathcal{T}_{sup}|}{N_s N_t} \sum_{j=1}^{N_s N_t} \|\mathbf{u}_h(\muv_j, t_k) - \mathbf{V}\mathbf{V}^T\mathbf{u}_h(\muv_j,t_k)\|^2 = \sum_{k>N} \sigma_k^2.
    \end{equation*}
    We then compress the available snapshots, namely, $\mathbf{Q} = \mathbf{V}^T \mathbf{U} \in \mathbb{R}^{N \times N_sN_t} = \{\{\mathbf{q}(\muv_j,t_k)\}_{k=1}^{N_t}\}_{j=1}^{N_s}$. 
    
\item  [(iii)] We train the branch network, i.e., we optimize the POD+DNN's weights and biases with respect to the data-driven loss functional
\begin{equation}\label{eq:vanilla_pod_dnn}
    \mathcal{L}_{POD+DNN} = \sum_{j=1}^{N_s} \sum_{k=1}^{N_t} \|\hat{\mathbf{q}}(\muv_j, t_k) - \mathbf{q}(\muv_j, t_k)\|^2
\end{equation}
employing suitable optimization algorithms, such as Adam \cite{kingma2017adam}.
\end{itemize}

\begin{figure}[ht]
\centering
\includegraphics[width=0.975\textwidth]{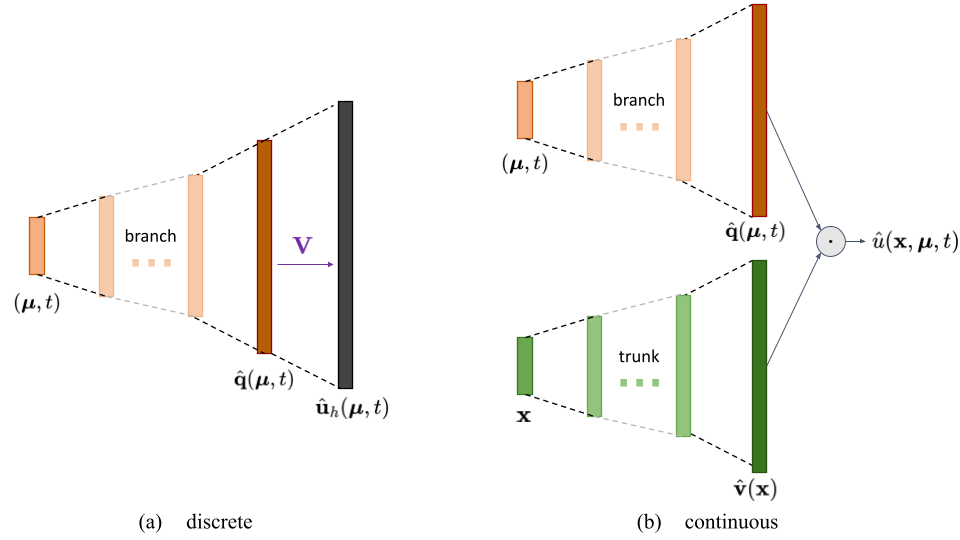}
\caption{Example of {\em (a)} space-discrete (POD+DNN) and {\em (b)} space-continuous (DeepONets) low-rank DL-based architectures.}
\label{fig:low-rank-schemes}
\end{figure}

\subsubsection*{Space-continuous DL-based architecture: DeepONets}

Despite they are ubiquitous in the operator learning framework because of their outstanding approximation and generalization capabilities, POD+DNNs architectures are limited by their discrete nature, in the sense that they cannot be evaluated continuously in the spatial domain $\Omega$. {\em Space-continuous} (or {\em mesh-agnostic}) low-rank DL-based architectures aim at filling this deficiency by casting the operator learning task in a suitable infinite-dimensional setting. Formally, we choose $\mathcal{X} \equiv \Omega$ (in the notation of Definition \ref{def:low-rank-deep-learning-based-approch}), so that $\hat{v}_k \in L^2(\Omega)$ for any $k=1,\ldots,N$.
DeepONets are a particular instance of space-continuous low-rank DL-based architectures that model the basis function $\hat{\mathbf{v}} = \{\hat{v}_k\} \in \mathbb{R}^N$ through a neural network, which is referred to as the \textit{trunk net}. In the end, by combining the trunk net and the branch net, we obtain the mesh-agnostic approximator 
\[
\hat{u}(\mathbf{x},\muv,t) := \hat{\mathbf{v}}(\mathbf{x}) \cdot \hat{\mathbf{q}}(\muv,t) \approx u(\mathbf{x},\muv,t), \qquad \forall \mathbf{x} \in \overline{\Omega}, \, \muv \in \mathcal{P}, t \in \mathcal{T}.
\]
We stress that, in practice, one recovers further regularity (at least $\hat{v}_k \in \mathcal{C}(\overline{\Omega})$) depending on the smoothness of the activation function of the trunk net \cite{lanthaler2022error}: the continuity assumption is essential for the mesh-agnostic evaluation of the approximated solution $\hat{u}(\mathbf{x},\muv,t)$. 

Nevertheless, we emphasize that the mesh-agnostic property of DeepONets comes with some drawbacks. Indeed, if POD+DNNs entail the pre-computation of the POD basis $\mathbf{V} \in \mathbb{R}^{N_h \times N}$, within the DeepONet paradigm the trunk net is trained simultaneously with the branch net. Specifically, their data-driven loss functional is
\begin{equation}
\label{eq:loss_deeponet_supervised}
    \mathcal{L}_{DeepONet} = \sum_{i=1}^{N_h} \sum_{j=1}^{N_s} \sum_{k=1}^{N_t} |u_{h,i}(\muv_j,t_j) - \hat{\mathbf{v}}(\mathbf{x}_i) \cdot \hat{\mathbf{q}}(\muv_j,t_k)|^2,
\end{equation}
where $u_{h,i}$ is the $i$-th element of the high-fidelity solution $\mathbf{u}_h$, usually  minimized by employing either Adam \cite{kingma2017adam} or other adequate optimization algorithms.
We remark that the simultaneous training of branch and trunk implies in a loss of training stability and entails higher backpropagation costs, resulting in a more computationally demanding training. Nonetheless, the discrete basis functions provided by POD are optimal for the training data at hand, while we have no guarantee of optimality for the trained trunk net's basis functions. Moreover, POD modes are ordered depending on the ``energy" that they retain, whereas the trunk net functions are not sorted. We refer the reader to Fig.~\ref{fig:low-rank-schemes} for a visual comparison of the architectures of space-discrete and space-continuous approaches.

\subsection{POD-DL-ROMs as low-rank deep learning-based architectures}

Among low-rank DL-based architectures, we now focus on the POD-DL-ROM paradigm, which can be seen as a special space-discrete, low-rank approach, suitable for small data regimes. 
POD-DL-ROMs can be seen as a particular instance of POD+DNNs, employing an autoencoder-based architecture as underlying neural network; for an in-depth analysis of the technique we refer the reader to \cite{brivio2023error, fresca2022poddlrom}). Aside from the preprocessing procedure through POD, POD-DL-ROMs (see Fig.~\ref{fig:pod-dl-rom}) are obtained by combining an encoder $\Psi'(\bm{\theta}_{\Psi'}): \mathbb{R}^N \rightarrow \mathbb{R}^n$ that further compresses the information coming from the POD coefficients to $p + 1\le n < N$ latent variables, a decoder $\Psi(\bm{\theta}_{\Psi}): \mathbb{R}^n \rightarrow \mathbb{R}^N$, and a (deep) feedforward neural network $\phi(\bm{\theta}_{\phi}): \mathbb{R}^{p+1} \rightarrow \mathbb{R}^n$ that maps the input parameters to the aforementioned latent representation to perform both a change of variables and an augmentation of the input parameters. We denote by $\bm{\theta}_{\mathbf{q}} := (\bm{\theta}_{\Psi'}, \bm{\theta}_{\Psi}, \bm{\theta}_{\phi})$ the vector including all the trainable weights and biases of the aforementioned neural networks; hereon, we omit them in the notation for the sake of clarity.

In their original formulation, POD-DL-ROMs rely on a fully data-driven optimization phase. Thus, the underlying physics governing the problem at hand is only inferred from the collected snapshots' data. In other words, no physics-based constraints are imposed neither in the loss formulation, nor as hard constraint in the architecture. Thus, without including any physics-based knowledge, the data-driven loss function to be minimized during the training phase is made by two distinct terms:  
\begin{equation}
    \label{eq:POD-DL-ROM-loss}
     \mathcal{L}_{POD-DL-ROM} = \mathcal{L}_{POD-DL-ROM}(\omega_N, \omega_n) = \omega_{N} \mathcal{L}_{N} + \omega_n \mathcal{L}_{n},
\end{equation}
where $\omega_{N}, \omega_{n} \ge 0$, with $\omega_{N} +  \omega_{n} = 1$, and
\begin{equation*} 
           \mathcal{L}_{N}  = \sum_{j=1}^{N_s}
         \sum_{k=1}^{N_t} 
         \|(\Psi \circ \phi)(\muv_j,t_k) - \mathbf{V}^T\mathbf{u}_h(\muv_j,t_k)\|^2, \qquad \mathcal{L}_{n} =  
         \sum_{j=1}^{N_s} 
         \sum_{k=1}^{N_t} 
         \|\Psi'(\mathbf{V}^T\mathbf{u}_h(\muv_j,t_k)) - \phi(\muv_j,t_k)\|^2. 
  \end{equation*}
Optimizing the loss $\mathcal{L}_{POD-DL-ROM}$ during the \textit{offline} training phase implies {\em (i)} the minimization of the reconstruction error, represented by $\mathcal{L}_N$, and {\em (ii)} the fulfillment of a suitable latent representation, obtained by enforcing the compatibility condition represented by $\mathcal{L}_n$. We emphasize that the encoder $\Psi'$ is employed only at the training stage to seek a suitable latent encoding, however it does not play any active role in the approximation of the forward map $(\muv,t) \mapsto \mathbf{u}_h(\muv,t)$ since it is not connected to the rest of the architecture: for this reason, we can discard it in the \textit{online} prediction phase. Then, we define as 
\[
\hat{\mathbf{u}}_h(\muv, t) = \mathbf{V} \hat{\mathbf{q}}(\muv, t) = \mathbf{V} (\Psi \circ \phi)(\muv, t) \approx \mathbf{u}(\muv, t)
\]
the ROM network, where $\mathbf{V} \in \mathbb{R}^{N_h \times N}$ is the POD matrix, thus resulting in a special instance of POD+DNN approach. Nonetheless, if we compare the POD-DL-ROM loss function \eqref{eq:POD-DL-ROM-loss} with the data-driven POD+DNN loss function \eqref{eq:vanilla_pod_dnn} we immediately see that they differ by the additive term $\omega_n \mathcal{L}_{n}$. From a computational viewpoint, the latter term allows POD-DL-ROMs to benefit from the autoencoder-based architecture by enforcing an adequate representation in the latent space through a data-driven regularization, thus alleviating the complexity of the decoder \cite{brivio2023error, fresca2022poddlrom}. We emphasize that limiting the complexity by design is a huge advantage when dealing with small data regimes: indeed, if the amount of available training data is insufficient,  over-parametrized architectures may suffer from overfitting. 

\begin{figure}[ht]
\centering
\includegraphics[width=0.75\textwidth]{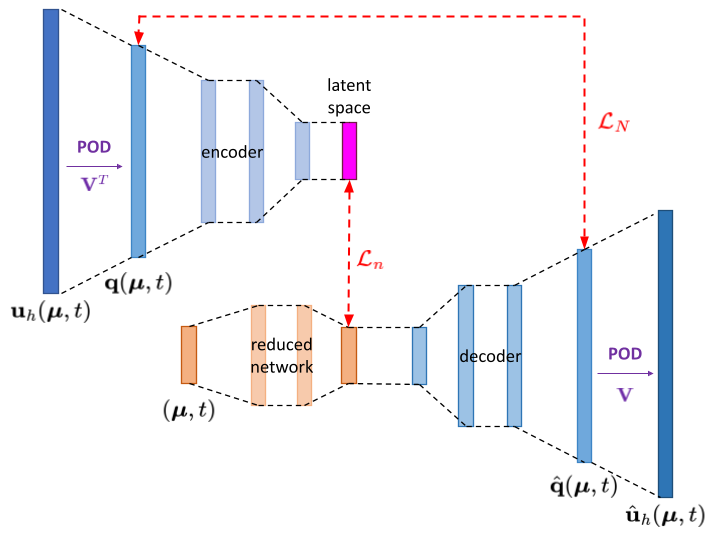}
\caption{The POD-DL-ROM architecture: we highlight the POD projection, the {discrete lifting} induced by the matrices $\mathbf{V}^T$ and $\mathbf{V}$, and the autoencoder-based architecture.}
\label{fig:pod-dl-rom}
\end{figure}

Note that the approaches presented within this section have been originally conceived as data-driven paradigms. However, a data-driven optimization for low-rank DL-based architectures -- as for any other DL architecture -- requires a suitable amount of data to be properly trained in order to be able to generalize well. To  compensate the  availability of (usually, few) training data, recent trends in the literature propose physics-based loss functionals in the optimization phase \cite{wang2021learning,chen2021}. The purpose of this work is to  show how to equip our former POD-DL-ROM architecture with physical constraints, ultimately yielding  our new PTPI-DL-ROM architecture.

\section{Mitigating the shortfalls of small data regimes with physics} 
\label{sec:mitigating-the-shortfalls-small-data}
Several recent works in the literature \cite{hesthaven2018, fresca2022poddlrom,  brivio2023error, lu2021learning, lu2022comprehensive, li2020fourier, mucke2021} showed that data-driven DL-based architectures may require a large amount of supervised training samples depending on the variability of the problem and the required test set accuracy. However, in the case of complex problems, data collection might become expensive or even unaffordable. Hence, training data could be scattered in the time-parameter space, with large portions of this latter substantially uncovered 
\cite{xu2023small, brigato2021close}.  
It is well known that the generalization capabilities of data-driven neural network paradigms are skewed in the case of scarce or unavailable labeled data \cite{zhu2023reliable}. Within this section, we analyze in detail data scarcity/unavailability from a theoretical standpoint. 
Formally, following the notation of Section \ref{sec:mathematical-overview}, we do not assume that the time-parameter sampling domain of the supervised training set and the test set coincide. To better clarify our setting, let us define both {\em interpolation} and {\em extrapolation} tasks as follows.
\begin{definition}
    We deal with an interpolation task in the time-parameter domain if $\mathcal{P}_{test} \times \mathcal{T}_{test} \equiv \mathcal{P}_{sup} \times \mathcal{T}_{sup}$; we deal with an extrapolation task if $\mathcal{P}_{test} \times \mathcal{T}_{test} \supsetneq \mathcal{P}_{sup} \times \mathcal{T}_{sup}$.
\end{definition}
In practice, data scarcity and/or unavailability may be due to {\em (i)} a limited number of training samples (small data interpolation regime, which occurs when $\mathcal{P}_{test} \times \mathcal{T}_{test} \equiv \mathcal{P}_{sup} \times \mathcal{T}_{sup}$ and $|\mathsf{D}_{sup}|$ is small), {\em (ii)} the absence of available labeled data in an extended region of the time-parameter space (extrapolation regime), or {\em (iii)} a combination of the former two cases. To mitigate the limits of data-driven architectures in the latter scenarios, physics-informed DL paradigms have arisen as powerful techniques to learn parametric operators without requiring labeled input-output observations \cite{raissi2019physics, wang2021learning}. The hypothesis which they are built upon is that the {terms of the} underlying physical models \eqref{eq:general_formulation} -- and their discretized version \eqref{eq:general_formulation_FOM} as well -- convey the same information of the data synthetically obtained from them through high-fidelity solvers. The main advantage is that the physics-based loss formulation, which is obtained by minimizing the residual of \eqref{eq:general_formulation} -- or equivalently of \eqref{eq:general_formulation_FOM} -- is unsupervised in the sense that it does not require paired input-output observations \cite{safonova2023ten,wang2021learning}. Thanks to this latter observation, we can arbitrarily sample the input parameters from the residual time-parameter space $\mathcal{P}_{res} \times \mathcal{T}_{res}$, enforcing the physics-based constraint instead of minimizing the reconstruction error in the regions of the time-parameter space where training data are not available. Thus, in general, we choose $\mathcal{P}_{res} \times \mathcal{T}_{res} \supseteq \mathcal{P}_{test} \times \mathcal{T}_{test}$, and we define the unlabeled dataset as follows,
\begin{equation*}
    \mathsf{D}_{res} = \biggl\{(\muv,t)_j \in \mathcal{P}_{res} \times \mathcal{T}_{res} \biggr\}_{j=1}^{N_{res}}.
\end{equation*}
We note that we can sample $\mathsf{D}_{res}$ efficiently -- using for instance random samplers or more involved algorithms \cite{mckay2023comparison} -- since the data belonging to $\mathsf{D}_{res}$ are unlabeled, that is, we do not need to use the high-fidelity solver. We refer to Fig.~\ref{fig:datasets} for a sketch of the time-parameter domains $\mathcal{P}_{sup} \times \mathcal{T}_{sup}, \mathcal{P}_{test} \times \mathcal{T}_{test}$ and $\mathcal{P}_{res} \times \mathcal{T}_{res}$, along with their realizations.

\begin{figure}
    \centering
\includegraphics[width=0.45\textwidth]{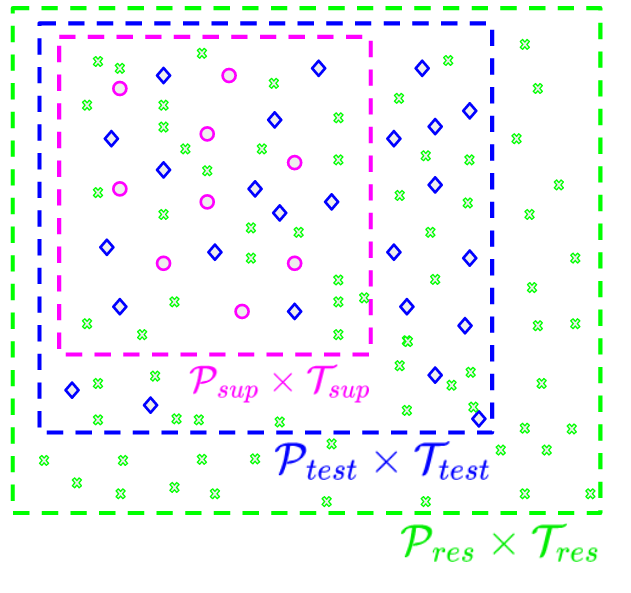}
    \caption{An instance of the time-parameter domains $\mathcal{P}_{sup} \times \mathcal{T}_{sup}, \mathcal{P}_{test} \times \mathcal{T}_{test}$ and $\mathcal{P}_{res} \times \mathcal{T}_{res}$ and their realizations, in dimension $p+1=2$.}
    \label{fig:datasets}
\end{figure}

\subsection{The computation of the physics-based residual}
We now discuss possible alternatives for the computation of  a physics-based residual, identifying the strong continuous formulation as the most versatile among them. 

When dealing with space-discrete low-rank neural network architectures, like POD+DNNs, one of the most natural strategies to compute the physics-based residual could involve the discretized high-fidelity formulation of the problem \eqref{eq:general_formulation_FOM} or its projection on a low-rank subspace. For instance, the technique presented in \cite{chen2021} concerns POD+DNNs entailing the approximation ${\bf u}_h = {\bf u}_h(\muv, t) \approx \mathbf{V} \hat{\mathbf{q}}(\muv,t)$. From the high-fidelity formulation \eqref{eq:general_formulation_FOM}, by employing a Galerkin projection we can write the physics-based residual of the problem at hand as
\begin{equation}
\label{eq:residual_discr}
    \mathcal{R}^{discr}_{PDE}(\muv,t) = {\bf V}^T \biggl({\bf M}(\muv) {\bf V} \frac{\partial \hat{\mathbf{q}}}{\partial t}(\muv,t) + {\bf A}(\muv,t){\bf V} \hat{\mathbf{q}} (\muv,t) + {\bf N}({\bf V} \hat{\mathbf{q}}(\muv,t),\muv,t) - \mathbf{f}(\muv,t)\biggr),  
\end{equation}
if $t>0$, and
\begin{equation*}
    \mathcal{R}^{discr}_{IC}(\muv) = \hat{\mathbf{q}}(\muv, 0) -  {\bf V}^T{\bf u}_{FOM,0}(\muv)
\end{equation*}
otherwise. 
However, this approach may struggle in the presence of, e.g., complex physics involving non-polynomial nonlinearities and non-affine parameter dependencies. Indeed:
\begin{itemize}
    \item the authors of \cite{chen2021} demonstrate how to apply their proposed approach only to linear problems and to equations featuring only quadratic nonlinearities. Indeed, their method cannot be generalized to non-polynomial nonlinearities without additional costly efforts. To avoid the explicit computation of the high-dimensional nonlinear term, namely ${\bf N}({\bf V} \hat{\mathbf{q}}(\muv,t),\muv,t)$, one might still rely on hyper-reduction methods, which may entail a large reduced mesh and remain expensive;
    \item for the computation of the residual as in \eqref{eq:residual_discr}, we still have to assemble the FOM matrices even for samples belonging to the residual dataset, thus implying additional costs both in the data collection phase and in the training stage, especially if the problem depends non-affinely on the parameters. For example, suppose that the stiffness matrix $\mathbf{A}(\muv,t)$ is non-affine, namely, it does not admit a linear decomposition of the form
    \begin{equation*}
        \mathbf{A}(\muv,t) = \sum_p \theta_l(\muv,t) \mathbf{A}_l, \quad \mbox{for } \theta_l : \mathbb{R}^p \mapsto \mathbb{R}, \mathbf{A}_l \in \mathbb{R}^{N_h \times N_h}.
    \end{equation*}
    Then, the proposed algorithm requires to compute and store the set of matrices 
    \begin{equation*}
        \mathcal{A}_{res} = \{\mathbf{V}^T\mathbf{A}(\muv_j,t_j)\mathbf{V} \in \mathbf{R}^{N \times N}: (\muv_j,t_j) \in \mathcal{P}_{res} \times \mathcal{T}_{res}\}_{j=1}^{N_{res}}
    \end{equation*} 
    for the entire duration of the training, thus entailing a computational bottleneck, especially  if $|\mathcal{A}_{res}| = N_{res}\gg 1$;
    \item the residual dataset cannot be resampled without summoning the high-fidelity solver, which is computationally prohibitive in the case of real applications involving non-affine and nonlinear problems. It is worth to notice that resampling techniques (and evolutionary sampling) are actually useful to ensure convergence to adequate minima when dealing with a physics-based loss formulation \cite{daw2023mitigating}.
\end{itemize}
Possible alternatives could rely on, e.g., the weak formulation  of the problem \cite{deryck2022wpinns} or a wavelet-based technique \cite{ernst2022certified}. However, the variational formulation of the physical problem still requires tedious and expensive numerical integrations, and the wavelet-based counterpart is only available  (with periodic wavelets) for simple domains and equispaced discretizations. Therefore, neither of these two options is suitable for complex problems in non-trivial domains. 

In the following, we consider the strong continuous formulation of the residual based on \eqref{eq:general_formulation} \cite{wang2021learning,zhu2023reliable}, whose continuous nature makes it versatile enough for our purposes. However, we stress that the strong continuous formulation of the residual is properly defined only for spatially continuous low-rank DL-based architectures that enable pointwise evaluations for any $\mathbf{x} \in \overline{\Omega}$. By setting $\hat{u} = \hat{u}(\mathbf{x},\muv,t) = \hat{\mathbf{v}}(\mathbf{x}) \cdot \hat{\mathbf{q}}(\muv,t)$, where $ \hat{\mathbf{v}}(\mathbf{x})$ and $\hat{\mathbf{q}}(\muv,t)$ are the trunk and branch network respectively, the strong formulation of the residual reads
\begin{equation*}
\begin{aligned}
    \mathcal{R}^{sc}_{\Omega}(\mathbf{x},\muv,t) &=  \sum_{k=1}^N \biggl\{ \frac{\partial{\hat{q}_k(\muv,t)}}{\partial{t}}\hat{v}_k(\mathbf{x}) +  \mathcal{A}(\mathbf{x},\muv,t) \hat{q}_k(\muv,t) \hat{v}_k(\mathbf{x})+ \\ &\hspace{3cm}+ \mathcal{N}(\hat{q}_k(\muv,t)\hat{v}_k(\mathbf{x}),\mathbf{x},\muv,t)  - f(\mathbf{x},\muv,t) \biggr\}, 
\end{aligned}
\end{equation*}
if $\mathbf{x} \in \Omega$ and $t>0$; on the other hand, if $\mathbf{x} \in \partial \Omega$ and $t > 0$, we have
\begin{equation*}
\mathcal{R}^{sc}_{\partial{\Omega}}(\mathbf{x},\muv,t) = \sum_{k=1}^N \mathcal{B}(\mathbf{x},\muv) \hat{q}_k(\muv,t) \hat{v}_k(\mathbf{x}) - g(\mathbf{x},\muv,t).
\end{equation*}
Similarly, the residual for the initial conditions could be expressed  as
\begin{equation*}
    \mathcal{R}^{sc}_{IC}(\mathbf{x},\muv) = \sum_{k=1}^N \hat{q}_k(\muv,0) \hat{v}_k(\mathbf{x}) -  u(\mathbf{x}, \muv, 0).
\end{equation*}
On a further note, we emphasize that, while the time derivatives can easily be recovered either with Automatic Differentiation (AD) \cite{baydin2017automatic} or finite difference schemes since the time variable is one-dimensional, when dealing with spatial derivatives, AD is the only reliable technique to provide accurate approximations for any domain shape. However, the evaluation of (possibly high-order) spatial derivatives with AD remains the main bottleneck in the residual computation pipeline, especially in the case of complex underlying neural network models. 

In this respect, we remark that within the present work we consider sophisticated physical models that require (very) deep trunk networks that properly model detailed low-scale effects in the global spatial modes to enable a faithful approximation of the ground truth solution. Thus, we deem it appropriate to perform a preliminary numerical experiment showing how the trunk complexity and the derivative order impact on the computational time required by AD. We proceed as follows. We set $\hat{\mathbf{v}}_{(l,w)}: \mathbb{R}^{d} \rightarrow \mathbb{R}^{N}$ as the trunk network endowed with $l-1$ hidden layers consisting of $w$ neurons each. Without loss of generality, we suppose that the input dimension is $d = 3$ and the output dimension is $N = 10$, and we fix the batch size as $b=100$. Then, we measure the computational time required to evaluate the first and second derivative of the trunk network output with respect to its input, in two different scenarios, namely, {\em (i)} as function of the network depth $l \in \{3,4,\ldots,10\}$, fixing the width to $w = 10$, and {\em (ii)} as function of the width $w \in \{5+3k, \mbox{ for } k = 0,\ldots,8\}$, fixing $l = 10$. We repeat the latter experiment $10$ times for each configuration and we report the results in Fig.~\ref{fig:test_ad}. It is rather  evident that the higher the derivative order, the higher the  burden entailed by derivative computation. Moreover, one can notice that, for both first and  second derivatives, the width  of the trunk network does not impact on the computational time required to calculate derivatives. On the other hand, the spatial derivatives' computational time strongly depends on the depth of the trunk network, especially in the case of the second derivative, for which the resulting trend is very steep. 

We conclude by emphasizing that the derivatives' computation is performed during the training phase and is therefore registered in the backpropagation's computational graph, thus causing the training time to grow quickly, especially in the case of deep trunk networks and physics-based residuals including higher-order derivatives.
For this reason, we deem it necessary to devise a suitable algorithm that limits the computational burden of the training phase entailed by the calculation of the physics-based residual.

\begin{figure}[ht!]
    \centering
    \includegraphics[width=0.925\textwidth]{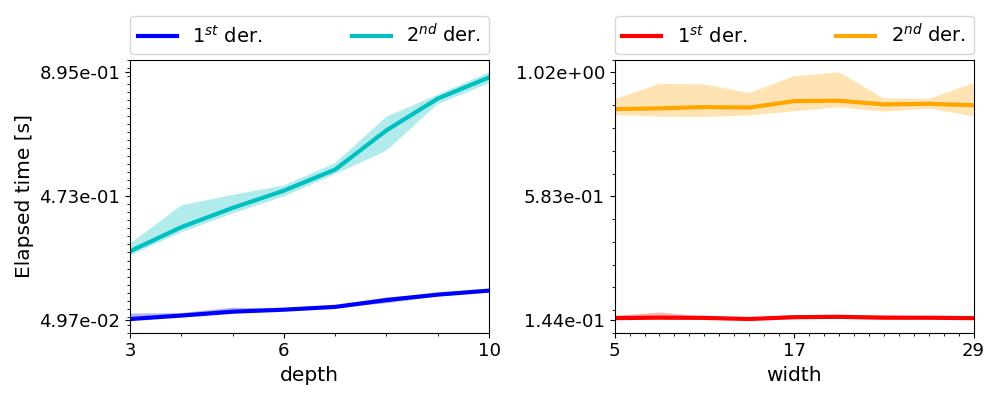}
    \caption{Computational time required for AD as function of the neural network depth {\em (left)} and width {\em (right)}. We remark that the continuous line represent the average value among $10$ runs, whereas the reported interval shows the minimum and the maximum value registered.}
    \label{fig:test_ad}
\end{figure}

\section{The PTPI-DL-ROM paradigm}
In the present section we justify the choice of the underlying neural network architecture at the foundation of our new paradigm. We introduce a suitable loss functional intertwining data-driven and physics-informed contributions, and we describe the full training algorithm, which comprises a lightweight pre-training stage and a fine-tuning phase.

\subsection{Architecture design and pre-training algorithm}

The optimization of DL architectures entailing physics-based loss functions through gradient descent algorithms is a notoriously complex task that shows recurrent modes of failure even in trivial numerical experiments \cite{deryck2023operator, krishnapriyan2021characterizing, wang2022pinns}.
To avoid those optimization failures, in the literature some authors have proposed complex training algorithms based on evolutionary sampling \cite{daw2023mitigating} or adversarial strategies \cite{wang2022l2}. Other works have shown how small amount of data can limit these phenomena \cite{yang2024datadriven, rohrhofer2023role, gopakumar2023loss}. Indeed, combining data and physics can lead to a powerful paradigm for operator learning: if on the one hand physics helps in supplying information to the neural network when data are missing, on the other hand few available data can mitigate the risk of optimization failure and limit the computational burden of training. 

To blend data and physics and devise a quick, stable, and efficient physics-aware training algorithm that we will refer to as PTPI-DL-ROM paradigm, we merge POD-DL-ROMs and DeepONets, leveraging their positive elements. In particular, from POD-DL-ROMs we keep {\em (i)} the preliminar dimensionality reduction through POD and the autoencoder-based architecture in order to extract the most informative features from the handful of available data while avoiding overfitting, and {\em (ii)} the low computational cost of the training procedure, which is characterized by a suitable offline-online decomposition. On the other hand, from DeepONets we retain the characteristic mesh-agnostic property of space-continuous low-rank DL-based architectures that enable the evaluation of the physics-based residual with the strong continuous formulation. Based on this, we devise a lightweight pre-training strategy, proceeding as follows.

\smallskip
\noindent \textbf{Step 1.} The first step of pre-training consists in extracting the most important information from the available data in order to construct a set of representative spatial modes. To do that, inspired by POD-DL-ROMs (and in general by the POD+DNN family of architectures) we rely on POD to compute the basis matrix $\mathbf{V} \in \mathbb{R}^{N_h \times N}$ (through SVD or its randomized version \cite{szlam2014}) which however only induces a {discrete lifting}. To obtain a set of continuous functions required for the residual computation in a strong continuous formulation, we train the trunk net $\hat{\mathbf{v}}(\mathbf{x}) \in \mathbb{R}^N$ to interpolate the POD modes, thus devising a supervised loss functional, namely,
\begin{equation}
\label{eq:loss_trunk}
    \mathcal{L}_{trunk} = \sum_{i \in \mathcal{I}_{train}} \|\hat{\mathbf{v}}(\mathbf{x}_i) - \mathbf{V}[i,:]\|^2,
\end{equation}
where $\mathcal{I}_{train} \subset \{1,\ldots,N_h\}$ and $\mathbf{V}[i,:]$ extracts the $i$-th row of $\mathbf{V}$. 

Then, we instantiate the branch net architecture, which induces a set of time-parameter-dependent coefficients, namely $(\muv,t) \mapsto \hat{\mathbf{q}}(\muv,t)$, as a DL-ROM\footnote{We refer to a DL-ROM architecture \cite{fresca2021dlrom} as to the one obtained by combining the autoencoder $\Psi' \circ \Psi$ and a deep feedforward neural network $\phi$ as in a POD-DL-ROM, however without including any preliminary dimensionality reduction.} since DL-ROMs {\em (i)} feature a lower complexity when compared to vanilla DNNs \cite{franco2023deep,brivio2023error}, and {\em (ii)} constrain the latent space to have a meaningful latent representation (of dimension $n \ll N$) of the available supervised data thanks to their autoencoder-based architecture. In particular, we define the encoder $\mathbb{R}^N \ni \mathbf{V}^T\mathbf{u}_h(\muv,t) \mapsto \Psi'(\mathbf{V}^T\mathbf{u}_h(\muv,t)) \in \mathbb{R}^n$, the reduced network as $\mathbb{R}^{p+1} \ni (\muv,t) \mapsto \phi(\muv,t) \in \mathbb{R}^n$ and the decoder as $\mathbb{R}^{n} \ni \phi(\muv,t) \mapsto \Psi(\phi(\muv,t)) \in \mathbb{R}^N$. Furthermore, we complement the DL-ROM based architecture with the trunk net, namely $u(\mathbf{x},\muv,t) \approx \hat{\mathbf{v}}(\mathbf{x}) \cdot \hat{\mathbf{q}}(\muv,t)$, thus obtaining a special instance of a DeepONet (see Fig.~\ref{fig:ptpi-dl-rom}).

\begin{figure}[ht!]
    \centering
    \includegraphics[width=0.75\textwidth]{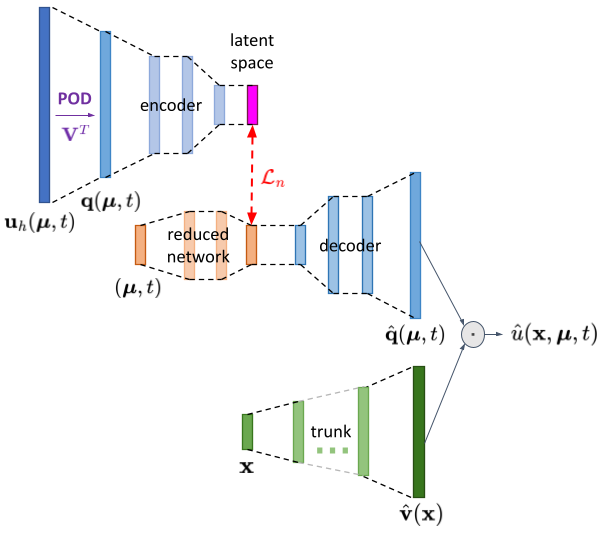}
    \caption{Schematic representation of the PTPI-DL-ROM architecture.}
    \label{fig:ptpi-dl-rom}
\end{figure}

\smallskip
\noindent \textbf{Step 2.} Before starting the subsequent phase of pre-training, we deem the trunk  net $\hat{\mathbf{v}}$ untrainable, namely, we ``freeze" its weights and biases, making them unaffected by any gradient descent procedure from now on. Thus, in the next stage, we only pre-train the branch net, \textit{de facto} decoupling the training of the trunk net and the branch net -- this is in contrast with the classic DeepONet training paradigm. Since the trunk network is ``frozen", we can pre-compute a set of quantities that will be useful when evaluating the loss function, involving both a data-driven and a physics-informed term. In particular, we store the matrix $\hat{\mathbf{V}}  \approx \mathbf{V} \in \mathbb{R}^{N_h \times N}$, computed as follows $[\hat{\mathbf{V}}]_{ik} = \hat{v}_k(\mathbf{x}_i)$ for $i=1,\ldots,N_h$ and $k=1,\ldots,N$. Then, we select a set of points $\{\mathbf{y}_i\}_{i=1}^{N_y} \subset \overline{\Omega}$ (that may coincide with the mesh vertices $\{\mathbf{x}_i\}_{i=1}^{N_h}$) to evaluate the physics-based residual at, and we compute offline the spatial derivatives entailed by the linear elliptic operator $\mathcal{A}$, the nonlinear operator $\mathcal{N}$ and the boundary operator $\mathcal{B}$.

We highlight that the obtained low-rank architecture is space-continuous, thus we can formulate a loss functional that blends data and physics as follows,
\begin{equation}
\label{eq:loss_ptpi}
    \mathcal{L}_{PTPI-DL-ROM} = \omega_N \mathcal{L}_N + \omega_n \mathcal{L}_n + \omega_{\Omega} \mathcal{L}_{\Omega} + \omega_{\partial \Omega} \mathcal{L}_{\partial \Omega}, 
\end{equation}
where
\begin{equation*}
\begin{aligned}
\mathcal{L}_N  &= \sum_{j=1}^{N_s}
         \sum_{k=1}^{N_t} 
         \|\hat{\mathbf{V}}\Psi \circ \phi(\muv_j,t_k) - \mathbf{u}_h(\muv_j,t_k)\|^2, \\
\mathcal{L}_{n} &=  
         \sum_{j=1}^{N_s} 
         \sum_{k=1}^{N_t} 
         \|\Psi'(\mathbf{V}^T\mathbf{u}_h(\muv_j,t_k)) - \phi(\muv_j,t_k)\|^2 \\
\mathcal{L}_{\Omega} &= \sum_{i=1}^{N_y} \sum_{j=1}^{N_{res}}
          |\mathcal{R}^{sc}_{\Omega}(\mathbf{y}_i,\muv_j,t_j)|^2 \mathsf{1}_{\mathbf{y}_i \in \Omega} \\
\mathcal{L}_{\partial \Omega} &= \sum_{i=1}^{N_y} \sum_{j=1}^{N_{res}}
         |\mathcal{R}^{sc}_{\partial \Omega}(\mathbf{y}_i,\muv_j,t_j)|^2\mathsf{1}_{\mathbf{y}_i \in \partial\Omega},
\end{aligned}
\end{equation*}
where $\mathcal{L}_N$ minimizes the reconstruction error, $\mathcal{L}_{n}$ ensures a meaningful latent representation, $\mathcal{L}_{\Omega}$ enforces the PDE residual, while $\mathcal{L}_{\partial \Omega}$ constrains the neural network to satisfy the boundary conditions.

The resulting pre-training is suitably informative, yet remarkably lightweight, because {\em (i)} we decoupled the training of the trunk network and the branch network, {\em (ii)} the branch pre-training performed in Step 2 informs the architecture with the underlying physics without computing the expensive spatial derivatives during the online training phase. However, to further enhance the accuracy of our predictions, we operate an additional fine-tuning of the architecture.

\subsection{The rationale behind the fine-tuning}

The prediction accuracy of the proposed architecture is strictly bounded by the representation capabilities of the global spatial modes entailed by the trunk network with respect to the test solution manifold. Since the trunk network is constrained to approximate the set of POD modes computed with the available supervised data, we further elaborate on both interpolation and extrapolation tasks in the time-parameter space from the POD perspective. In particular, we investigate the effect of the amount of available training data on the generalization capabilities of POD. To do that, we first suppose to have access to an infinite number of training data and define the optimal rank-$N$ (with $N \le N_h$) representation for the supervised training manifold $\mathcal{S}_{sup} = \{\mathbf{u}_h(\muv,t) \in \mathbb{R}^{N_h}: (\muv,t) \in \mathcal{P}_{sup} \times \mathcal{T}_{sup}\}$.

\begin{definition}
\label{def:optimality-pod}
    The basis $\mathbf{V}_{\infty} \in \mathbb{R}^{N_h \times N}$ is optimal with respect to the supervised training manifold $\mathcal{S}_{sup}$ if 
    \begin{equation*}
        \begin{aligned}
        \sum_{k>N} \sigma_{k,\infty}^2 &= \int_{\mathcal{P}_{sup} \times \mathcal{T}_{sup}} \|\mathbf{u}_h(\muv, t) - \mathbf{V}_{\infty}\mathbf{V}_{\infty}^T\mathbf{u}_h(\muv, t)\|^2  d(\muv,t) 
        \\ &= \min_{\bm{W} \in \mathbb{R}^{N_h \times N}: \mathbf{W}^T\mathbf{W} = \bm{I}} \int_{\mathcal{P}_{sup} \times \mathcal{T}_{sup}} \|\mathbf{u}_h(\muv, t) - \mathbf{W}\mathbf{W}^T\mathbf{u}_h(\muv, t)\|^2  d(\muv,t),
        \end{aligned}
    \end{equation*}
where
\begin{equation*}
    \mathbf{K}_{\infty} = \int_{\mathcal{P}_{sup} \times \mathcal{T}_{sup}} \mathbf{u}_h(\muv,t) \mathbf{u}_h(\muv,t)^T d(\muv,t) \in \mathbb{R}^{N_h \times N_h}
\end{equation*} 
is the correlation matrix and $\{\sigma_{k,\infty}^2\}_{k=1}^{N_h}$ are its eigenvalues.
\end{definition}

However, the optimal representation entailed by $\mathbf{V}_{\infty}$ is in general not available, requiring an infinite amount of data \cite{brivio2023error}: in practice, we have to rely on the POD basis computed through SVD, employing the so-called {\em method of snapshots} \cite{QMN_16}, and thus obtaining a quasi-optimal POD representation, according  to the following definition.

\begin{definition}
\label{def:quasi-optimality-pod}
    We deem the basis $\mathbf{V} := \mathbf{V}(\mathsf{P}_{sup} \times \mathsf{T}_{sup}) \in \mathbb{R}^{N_h \times N}$ quasi-optimal with respect to the supervised training manifold $\mathcal{S}_{sup}$ if
    \begin{equation*}
    \begin{aligned}
        \sum_{k>N} \sigma_k^2 &= \frac{|\mathcal{P}_{sup} \times \mathcal{T}_{sup}|}{N_sN_t} \sum_{(\muv,t) \in \mathsf{P}_{sup} \times \mathsf{T}_{sup}}\|\mathbf{u}_h(\muv,t) - \mathbf{V}\mathbf{V}^T\mathbf{u}_h(\muv,t)\|^2 \\ &= \min_{\mathbf{W} \in \mathbb{R}^{N_h \times N}: \mathbf{W}^T\mathbf{W} = \bm{I}} \frac{|\mathcal{P}_{sup} \times \mathcal{T}_{sup}|}{N_sN_t} \sum_{(\muv,t) \in \mathsf{P}_{sup} \times \mathsf{T}_{sup}}\|\mathbf{u}_h(\muv,t) - \mathbf{W}\mathbf{W}^T\mathbf{u}_h(\muv,t)\|^2,
        \end{aligned}
    \end{equation*}
where
\begin{equation*}
    \mathbf{K} = \frac{|\mathcal{P}_{sup} \times \mathcal{T}_{sup}|}{N_sN_t} \mathbf{u}_h\mathbf{u}_h^T \in \mathbb{R}^{N_h \times N_h}
\end{equation*} 
is the correlation matrix and $\{\sigma_k^2\}_{k=1}^{N_h}$ are its eigenvalues. 
\end{definition}

Despite POD provides the best possible representation,  among linear subspaces of a given dimension, of the training data at hand (due to the Eckart-Young-Schmidt theorem \cite{eckart1936}), the POD basis computed through SVD is only quasi-optimal. Indeed, at a $(\mathcal{P} \times \mathcal{T})$-continuous level, we have that
\begin{equation*}
    \int_{\mathcal{P}_{sup} \times \mathcal{T}_{sup}} \|\mathbf{u}_h(\muv, t) - \mathbf{V}_{\infty}\mathbf{V}_{\infty}^T\mathbf{u}_h(\muv, t)\|^2  d(\muv,t) \le \int_{\mathcal{P}_{sup} \times \mathcal{T}_{sup}} \|\mathbf{u}_h(\muv, t) - \mathbf{V}\mathbf{V}^T\mathbf{u}_h(\muv, t)\|^2  d(\muv,t) .
\end{equation*}

In this respect, we focus on the formulation of a convergence result that aims at bridging the gap between the optimal POD basis $\mathbf{V}_\infty$ and the  quasi-optimal POD basis $\mathbf{V}$. To do that, we define the generalization error of POD with respect the time-parameter space $\mathcal{P} \times \mathcal{T}$ as
\begin{equation*}
     \mathcal{E}_{POD}^{gen}(\mathcal{P} \times \mathcal{T};N_s,N_t) = \int_{\mathcal{P} \times \mathcal{T}} \|\mathbf{u}_h(\muv, t) - \mathbf{V}\mathbf{V}^T\mathbf{u}_h(\muv, t)\|^2 - \|\mathbf{u}_h(\muv, t) - \mathbf{V}_{\infty}\mathbf{V}_{\infty}^T\mathbf{u}_h(\muv, t)\|^2  d(\muv,t),
\end{equation*}
and we state the following result.
\begin{lemma}
\label{lemma:pod-convergence}
    For any $N \le \min\{N_h,N_sN_t\}$, denote by $\mathbf{V} \in \mathbb{R}^{N_h \times N}$ the POD basis computed through SVD, and by $\mathbf{V}_{\infty} \in \mathbb{R}^{N_h \times N}$ the optimal POD projection matrix. Then, there exists a sampling strategy for $(\muv,t) \in \mathcal{P}_{sup} \times \mathcal{T}_{sup}$ such that
    \begin{equation*}
         \mathcal{E}_{POD}^{gen}(\mathcal{P}_{sup} \times \mathcal{T}_{sup};N_s,N_t) \xrightarrow[]{a.s.} 0, \qquad N_s,N_t\rightarrow +\infty.
    \end{equation*}
\end{lemma}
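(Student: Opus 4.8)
The plan is to recognize the generalization error as a difference of two quadratic forms controlled by the difference between the empirical correlation matrix $\mathbf{K}$ and the continuous correlation matrix $\mathbf{K}_\infty$, and then to invoke a law-of-large-numbers argument to show this difference vanishes almost surely. First I would rewrite, for any semi-orthogonal $\mathbf{W} \in \mathbb{R}^{N_h\times N}$, the projection defect $\int \|\mathbf{u}_h - \mathbf{W}\mathbf{W}^T\mathbf{u}_h\|^2 d(\muv,t)$ as $\operatorname{tr}(\mathbf{K}_\infty) - \operatorname{tr}(\mathbf{W}^T\mathbf{K}_\infty\mathbf{W})$, using idempotency and symmetry of $\mathbf{W}\mathbf{W}^T$; likewise the empirical defect is $\operatorname{tr}(\mathbf{K}) - \operatorname{tr}(\mathbf{W}^T\mathbf{K}\mathbf{W})$ up to the normalization $|\mathcal{P}_{sup}\times\mathcal{T}_{sup}|/(N_sN_t)$. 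The matrices $\mathbf{V}$ and $\mathbf{V}_\infty$ are, by Definitions \ref{def:optimality-pod} and \ref{def:quasi-optimality-pod} together with the Eckart--Young--Schmidt theorem, the maximizers of $\mathbf{W}\mapsto\operatorname{tr}(\mathbf{W}^T\mathbf{K}\mathbf{W})$ and $\mathbf{W}\mapsto\operatorname{tr}(\mathbf{W}^T\mathbf{K}_\infty\mathbf{W})$ respectively over the Stiefel manifold.

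Next I would choose the sampling strategy: draw $\muv_j$ i.i.d. uniformly (or, more precisely, according to the normalized Lebesgue measure) on $\mathcal{P}_{sup}$, and take the time instances $t_k = k|\mathcal{T}_{sup}|/N_t$ on a uniform grid as already prescribed in the excerpt; the product empirical measure on $\mathsf{P}_{sup}\times\mathsf{T}_{sup}$ then converges weakly to the (normalized) Lebesgue measure on $\mathcal{P}_{sup}\times\mathcal{T}_{sup}$. Under the standing compactness hypotheses — $\mathcal{P}_{sup}$, $\mathcal{T}_{sup}$ compact and $\mathbf{u}_h(\cdot,\cdot)$ a (measurable, essentially bounded, indeed continuous) map into $\mathbb{R}^{N_h}$ — each entry of the matrix-valued integrand $\mathbf{u}_h(\muv,t)\mathbf{u}_h(\muv,t)^T$ is bounded, so the strong law of large numbers (applied entrywise, using independence in the $\muv$-direction and a Riemann-sum / Glivenko--Cantelli-type argument in the $t$-direction) gives $\tfrac{|\mathcal{P}_{sup}\times\mathcal{T}_{sup}|}{N_sN_t}\mathbf{u}_h\mathbf{u}_h^T \xrightarrow{a.s.} \mathbf{K}_\infty$, i.e. $\mathbf{K}\to\mathbf{K}_\infty$ entrywise almost surely as $N_s,N_t\to+\infty$.

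The conclusion then follows by a standard perturbation argument. Writing $f_{\mathbf{K}}(\mathbf{W}) = \operatorname{tr}(\mathbf{W}^T\mathbf{K}\mathbf{W})$, I would note that
\begin{equation*}
    0 \le f_{\mathbf{K}_\infty}(\mathbf{V}_\infty) - f_{\mathbf{K}_\infty}(\mathbf{V}) = \bigl(f_{\mathbf{K}_\infty}(\mathbf{V}_\infty) - f_{\mathbf{K}}(\mathbf{V})\bigr) + \bigl(f_{\mathbf{K}}(\mathbf{V}) - f_{\mathbf{K}_\infty}(\mathbf{V})\bigr),
\end{equation*}
and observe that $f_{\mathbf{K}}(\mathbf{V}) \ge f_{\mathbf{K}}(\mathbf{V}_\infty) \ge f_{\mathbf{K}_\infty}(\mathbf{V}_\infty) - \|\mathbf{K}-\mathbf{K}_\infty\|_*$ while $f_{\mathbf{K}}(\mathbf{V}) \le f_{\mathbf{K}_\infty}(\mathbf{V}_\infty) + \|\mathbf{K}-\mathbf{K}_\infty\|_*$, where $\|\cdot\|_*$ denotes a norm for which $|f_{\mathbf{A}}(\mathbf{W}) - f_{\mathbf{B}}(\mathbf{W})| = |\operatorname{tr}(\mathbf{W}^T(\mathbf{A}-\mathbf{B})\mathbf{W})| \le N\|\mathbf{A}-\mathbf{B}\|_2$ uniformly over semi-orthogonal $\mathbf{W}$ (using $\|\mathbf{W}\|_2=1$ and $\operatorname{rank}\mathbf{W}=N$). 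Hence $\mathcal{E}_{POD}^{gen}(\mathcal{P}_{sup}\times\mathcal{T}_{sup};N_s,N_t) = f_{\mathbf{K}_\infty}(\mathbf{V}_\infty) - f_{\mathbf{K}_\infty}(\mathbf{V})$ is squeezed between $0$ and $2N\|\mathbf{K}-\mathbf{K}_\infty\|_2 \xrightarrow{a.s.} 0$, which proves the claim. I expect the main obstacle to be the subtle point that $\mathbf{V}$ need not converge to $\mathbf{V}_\infty$ as a matrix (eigenvalue multiplicity can make the maximizing subspace non-unique and the map $\mathbf{K}\mapsto\mathbf{V}$ discontinuous), so the argument must be phrased entirely in terms of the \emph{optimal values} $f_{\mathbf{K}}(\mathbf{V})$, $f_{\mathbf{K}_\infty}(\mathbf{V}_\infty)$ and their continuity in $\mathbf{K}$ — exploiting that both are attained maxima over the compact Stiefel manifold — rather than in terms of the minimizers themselves; a secondary technical care point is making the almost-sure convergence $\mathbf{K}\to\mathbf{K}_\infty$ rigorous in the mixed i.i.d.-in-$\muv$ / deterministic-grid-in-$t$ setting, which is why the explicit choice of sampling strategy in the statement matters.
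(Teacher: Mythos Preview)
Your argument is correct and closely related to, but technically distinct from, the paper's. The paper adds and subtracts $\sum_{k>N}\sigma_k^2$ to split $\mathcal{E}_{POD}^{gen}$ into two pieces: first the gap between the continuous integral $\int_{\mathcal{P}_{sup}\times\mathcal{T}_{sup}}\|\mathbf{u}_h-\mathbf{V}\mathbf{V}^T\mathbf{u}_h\|^2\,d(\muv,t)$ and its empirical counterpart (which equals $\sum_{k>N}\sigma_k^2$ by Definition~\ref{def:quasi-optimality-pod}), and second the gap $\bigl|\sum_{k>N}\sigma_k^2-\sum_{k>N}\sigma_{k,\infty}^2\bigr|$; both are then dispatched by invoking results from an external reference (Proposition~1 and Section~2 of \cite{brivio2023error}) under the same sampling strategy you chose. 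Your route is more self-contained: you pass directly to the correlation matrices, establish $\mathbf{K}\to\mathbf{K}_\infty$ entrywise via the strong law, and deduce convergence of the \emph{optimal values} through a quantitative stability bound over the Stiefel manifold. This buys you an explicit upper bound $2N\|\mathbf{K}-\mathbf{K}_\infty\|_2$ and cleanly circumvents a subtlety latent in the paper's first piece---there the integrand depends on $\mathbf{V}$, which itself depends on all samples, so the ``empirical average converges to integral'' step is not literally an i.i.d.\ mean and implicitly needs the uniformity over semi-orthogonal $\mathbf{W}$ that you make explicit. Conversely, the paper's decomposition stays closer to the standard POD error-analysis language of singular-value tails, which makes the connection to Definitions~\ref{def:optimality-pod}--\ref{def:quasi-optimality-pod} more transparent; in substance, both arguments are equivalent expressions of Weyl-type stability of the top-$N$ eigenvalue sum under perturbations of the correlation matrix.
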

\begin{proof}
    We report the proof in the Appendix.
\end{proof}

The latter result provides an insight on the performance of POD in the interpolation regime, where $\mathcal{P}_{test} \times \mathcal{T}_{test} \equiv \mathcal{P}_{sup} \times \mathcal{T}_{sup}$, so that $\mathcal{E}_{POD}^{gen}(\mathcal{P}_{test} \times \mathcal{T}_{test};N_s,N_t) = \mathcal{E}_{POD}^{gen}(\mathcal{P}_{sup} \times \mathcal{T}_{sup};N_s,N_t)$. Notably, from Lemma \ref{lemma:pod-convergence}, we can infer that in the interpolation regime we can expect that the more training data we have access to, the better the rank-$N$ representation entailed by POD in terms of retained variance. In other words, it is possible to make the generalization error on the test time-parameter space arbitrarily small. On the contrary, in the small data regime, where only a handful of scattered training time-parameters data points are available (namely, $|\mathsf{P}_{sup} \times \mathsf{T}_{sup}| = N_sN_t$ is small), POD might yield a suboptimal low-rank compression if the training dataset is not representative of the entire supervised training manifold $\mathcal{S}_{sup}$. 

Instead, when dealing with extrapolation regimes ($\mathcal{P}_{test} \times \mathcal{T}_{test} \supsetneq \mathcal{P}_{sup} \times \mathcal{T}_{sup}$) we have that 
\begin{equation*}
\begin{aligned}
&\mathcal{E}_{POD}^{gen}(\mathcal{P}_{test} \times \mathcal{T}_{test};N_s,N_t) = \mathcal{E}_{POD}^{gen}(\mathcal{P}_{sup} \times \mathcal{T}_{sup};N_s,N_t) + \\
& \hspace{5.5cm} + \mathcal{E}_{POD}^{gen}(\mathcal{P}_{test} \times \mathcal{T}_{test} \setminus \mathcal{P}_{sup} \times \mathcal{T}_{sup};N_s,N_t),
\end{aligned}
\end{equation*}
where we suppose that $|\mathcal{P}_{test} \times \mathcal{T}_{test} \setminus \mathcal{P}_{sup} \times \mathcal{T}_{sup}| > 0$.
We notice that, while we can sample suitably the supervised training dataset so that $\mathcal{E}_{POD}^{gen}(\mathcal{P}_{sup} \times \mathcal{T}_{sup};N_s,N_t)$ is arbitrarily small thanks to \ref{lemma:pod-convergence}, we cannot achieve the same conclusion for the remaining contribution of the generalization error referring to the test instances that belong to the extrapolation regime, namely $\mathcal{E}_{POD}^{gen}(\mathcal{P}_{test} \times \mathcal{T}_{test} \setminus \mathcal{P}_{sup} \times \mathcal{T}_{sup};N_s,N_t)$.

Then, once the test set becomes available, one may investigate \textit{a posteriori} the effect of suboptimality of the POD compression with a practical criterion, both in a small data interpolation and in an extrapolation regime. To this end, once the test instances become available, we can compute the empirical relative POD error as 
\begin{equation*}
    e_{POD}(\mathsf{D}_{test}) := \Biggl(\frac{\sum_{(\muv,t) \in \mathsf{D}_{test}}\|\mathbf{u}_h(\muv, t) - \mathbf{V} \mathbf{V}^T\mathbf{u}_h(\muv, t)\|^2}{\sum_{(\muv,t) \in \mathsf{D}_{test}}\|\mathbf{u}_h(\muv, t)\|^2}\Biggr)^{1/2}.
\end{equation*}

Lemma \ref{lemma:pod-convergence} and the subsequent remarks provide us with some insights on how small data interpolation and extrapolation regimes might impact on the representation capabilities of the POD modes and, by extension, of a trunk net approximating the POD modes. We emphasize that investigating the representation effectiveness of spatial global modes in the present context is of fundamental relevance since prediction accuracy of low-rank DL-based architectures is bounded by the low-rank projection error \cite{bhattacharya2021,brivio2023error}. For this reason, if the modes' representation capabilities are poor, the prediction accuracy might be unsatisfactory as well. However, by employing our pre-training algorithm, the trunk net modes are inferred only from the available data, and are not modulated by the physics. Indeed, the physics-based loss formulation plays a role only in Step 2 of the pre-training algorithm, where the trunk net is ``frozen", so that its weights and biases are not updated by gradient descent. 

In this respect, we equip our training procedure with a further fine-tuning that improves the representation capabilities of the global spatial modes and thus, by extension, enhances the accuracy and the reliability of the overall method. Specifically, we ``unfreeze" the trunk net, making its weights and biases susceptible to gradient descent updates. Then, we repeat Step 2 of pre-training, this time training simultaneously both the trunk and the branch networks. We stress that the fine-tuning is relatively more expensive since, differently from the pre-training phase, it entails the computation of the spatial derivatives during the training phase. On the other hand, the fine-tuning enables the trunk net and the branch net to be informed by physics at the same time. In this way, we can adjust simultaneously the modes entailed by the trunk net and the parametric map entailed by the branch net with the contribution of the physics-based residual corresponding to the samples drawn from the regions of the test time-parameter set not properly covered by the supervised training data, following the procedure explained in Section \ref{sec:mitigating-the-shortfalls-small-data}. The fine-tuning concludes the PTPI-DL-ROM training algorithm. For the complete training pseudocode, we refer the reader to Algorithm \ref{alg:pepi-dl-rom-training_alg}.

\begin{algorithm}[H]
    \label{alg:pepi-dl-rom-training}
    \caption{PTPI-DL-ROM training algorithm}
    \label{alg:pepi-dl-rom-training_alg}
    \begin{algorithmic}[1]
    \STATE \textbf{Inputs} Supervised training data $\mathsf{D}_{sup}$, unlabeled training data $\mathsf{D}_{res}$, set of mesh points $\{\mathbf{x}_i\}_{i=1}^{N_h}$, neural network architecture, reduced dimension $N$.
    \STATE Compute the POD basis $\mathbf{V} \in \mathbb{R}^{N_h \times N}$ with SVD using the supervised training data $\mathsf{D}_{sup}$.
    \STATE Pre-train the trunk net to approximate the POD basis with the loss functional \eqref{eq:loss_trunk}, using the labeled data $\{(\mathbf{x}_i,\mathbf{V}[i,:]\}_{i=1}^{N_h}$.
    \STATE ``Freeze" the trunk net's weights and biases.
    \STATE Pre-train the branch net with the loss functional \eqref{eq:loss_ptpi}, using $\mathsf{D}_{sup}$ as labeled data and $\mathsf{D}_{res}$ for the physics-based residual.
    \STATE ``Unfreeze" the trunk net's weights and biases.
    \STATE Fine-tune the branch net and the trunk net simultaneously with the loss functional \eqref{eq:loss_ptpi}, using $\mathsf{D}_{sup}$ as labeled data and $\mathsf{D}_{res}$ for the physics-based residual.
    \STATE \textbf{Output} Trained PTPI-DL-ROM architecture
    \end{algorithmic}
\end{algorithm}

\section{Numerical experiments}

In this section we assess the accuracy and the performance of the proposed PTPI-DL-ROM through a series of numerical experiments carried out on parametrized problems involving non-affine terms, non-stationary regimes, nonlinearities, or a combination of the former aspects. Specifically, we focus on:

\begin{itemize}
    \item[(i)] validating the proposed residual computation technique, assessing the reliability of the predictions on the test set;
    \item[(ii)] assessing the small data interpolation and extrapolation capabilities of the proposed PTPI-DL-ROMs;
    \item[(iii)] demonstrating the computational efficiency of the proposed training algorithm.
\end{itemize}

We employ generic dense layers, possibly with a Fourier feature expansion at the input \cite{wang2021learning, sharma2023stiff}, with sufficiently regular activations like ELU or SILU. Unless otherwise stated, we make use of the Adam optimizer to train neural networks, delegating $80\% \div 90\%$ of the supervised samples to the training set and the remaining samples to the validation set, through which we select the best model.

To measure the performance of the proposed ROM strategy on the test set, we choose to employ the following error indicators:
\begin{itemize}
\item the \textit{time-wise} relative error, which is employed to measure the accuracy of the predictions over time, namely,
\begin{equation*}
    E(t;N_{data}) := \frac{1}{N_s^{test}} \sum_{j=1}^{N_s^{test}} \biggl(\frac{\sum_{i=1}^{N_h} |u_{h,i}(\muv_j,t) - \hat{u}(\mathbf{x}_i,\muv_j,t)|^2}{\sum_{i=1}^{N_h} |u_{h,i}(\muv_j,t)|^2}\biggr)^{1/2};
\end{equation*}

\item the \textit{per-snapshot} relative error, which measures the variability of the error across the parameter space, that is
\begin{equation*}
    e(\muv;N_{data}) :=  \frac{(\sum_{k=1}^{N_t^{test}} \sum_{i=1}^{N_h} |\hat{u}_h(\mathbf{x}_i,\muv,t_k) - u_{h,i}(\muv,t_k)|^2)^{1/2}}{(\sum_{k=1}^{N_t^{test}} \sum_{i=1}^{N_h} |\mathbf{u}(\muv,t_k)|^2)^{1/2}};
\end{equation*}

\item the relative error over the entire time-parameter space, namely,
\begin{equation*}
    \mathcal{E}(N_{data}) := \frac{1}{N_s^{test}N_t^{test}} \sum_{j=1}^{N_s^{test}} \sum_{k=1}^{N_t^{test}} \frac{(\sum_{i=1}^{N_h} |u_{h,i}(\muv_j,t_k) - \hat{u}(\mathbf{x}_i,\muv_j,t_k)|^2)^{1/2}}{(\sum_{i=1}^{N_h} |u_{h,i}(\muv_j,t_k)|^2)^{1/2}}.
\end{equation*}
\end{itemize} 

We highlight the intrinsic dependence of all error indicators on the number of available supervised training samples $N_{data} = N_sN_t$.

\subsection{Eikonal Equation} 
The first test case deals with the Eikonal Equation, a stationary PDE with a nonpolynomial nonlinearity. Through the present test case we aim at demonstrating that, differently from data-driven POD-DL-ROMs, the proposed PTPI-DL-ROM is able to mitigate the shortfalls of small data regimes.

\smallskip
\noindent \textbf{Problem description.} We consider the problem
\begin{equation*}
     \left\{
    \begin{array}{rl}
    \displaystyle
    \|\nabla u\| = 1 & \quad \mbox{in } \Omega \\
    u = 0 & \quad \mbox{on } \Gamma(\mu),
    \end{array}
    \right.
\end{equation*}
where $\Omega = (-1,1)^2$ denotes the computational domain and $\Gamma := \Gamma(\mu) = \{\mathbf{x} = (x_1,x_2) \in \Omega \subset \mathbb{R}^2: x_1^2 + x_2^2 = \mu^2\}$ the circle of radius $\mu>0$. The exact solution of the problem above is given by
\begin{equation*}
    u_{exact}(x_1,x_2,\mu) = \mu - \sqrt{x_1^2+x_2^2}.
\end{equation*}

\smallskip
\noindent \textbf{Data collection and architectures.}
The problem does not depend on the time variable, thus we let $N_t = N_t^{test} = 1$. We evaluate the performance of our strategy using a test set that refers to the sampling $\mathsf{P}_{test} = \{0.13+0.05k, k =0,\ldots,17\} \subset \mathcal{P}_{test} = [0.1,1]$. To train the neural network, we collect a dataset of supervised samples referring to the parameter sampling $\mathsf{P}_{sup} = \{0.1+0.01k, k = 0,\ldots,40\} \subset \mathcal{P}_{sup} = [0.1,0.5]$. We remark that both the supervised data and the test set refer to the same mesh of $N_h = 900$ vertices on a 30-by-30 equispaced grid, while we choose $N = 2$ and $n = 2$ as POD and latent dimensions, respectively. Then, we design the trunk network, the encoder, the decoder, and the reduced network as dense ELU-neural networks consisting of $4$ hidden layers of $50$ neurons each.

\smallskip
\noindent\textbf{Training specifics.} 
We pre-train the trunk net for $3000$ epochs with a batch size of $10$ and a learning rate of $10^{-3}$, whereas the pre-training of the branch network involves $1000$ epochs, with a batch size of $1$ for the labeled data and $10$ for the physics-based loss term, employing a learning rate of $3 \times 10^{-4}$. We then decrease the learning rate to $1 \times 10^{-4}$ for the fine-tuning phase, which needs a total of $500$ epochs. We employ the loss functional \eqref{eq:loss_ptpi}, with the following choice of the hyperparameters that weight the four contributions: $\omega_N=\omega_n=\omega_{\Omega}=0.5$, $\omega_{\partial \Omega}=100$. 
Since $\mathcal{P}_{sup} \subsetneq \mathcal{P}_{test}$ we tackle an extrapolation task in the parameter space; thus, we uniformly sample the unlabeled dataset for the physics-based residual from $\mathcal{P}_{res} = [0.1, 1.1] \supset  \mathcal{P}_{test}$ every $5$ gradient descent epochs, collecting the realizations in $\mathsf{D}_{res} \subset \mathcal{P}_{res}$ with $ = |\mathsf{D}_{res}| = N_{res} = 1000$. 
We emphasize that we enforce the boundary conditions at $100$ uniformly spaced points on $\Gamma(\mu)$, whereas the collocation points for the imposition of the PDE residual are chosen as the mesh points during the pre-training phase and as $\{\mathbf{y}_i \sim \mathcal{U}(\Omega)\}_{i=1}^{1000}$ for the fine-tuning stage. 
We then compare the described PTPI-DL-ROM paradigm with a data-driven POD-DL-ROM architecture trained for $600$ epochs with a learning rate of $1 \times 10^{-3}$ and a batch size of $1$. We remark that the reported training specifics enable both PTPI-DL-ROM and POD-DL-ROMs to reach a suitable minimum in their loss functional.

\begin{figure}[b!]
    \centering
    \includegraphics[width=0.90\textwidth]{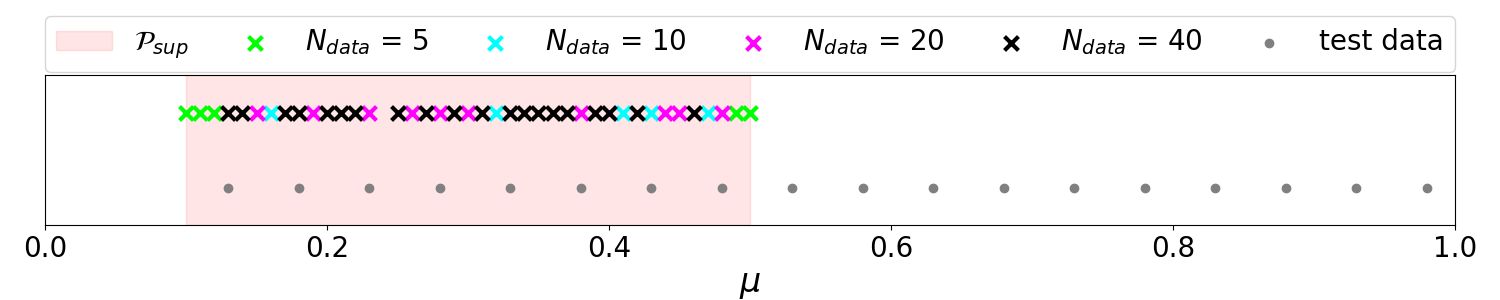}
    \caption{Eikonal equation: sampling strategy for the ablation study.}
    \label{fig:eikonal_visual}
\end{figure}

\smallskip
\noindent\textbf{Results analysis.} We measure how the proposed framework handles the small data regime by performing an ablation study. To do that, we iteratively construct  a group of supervised training datasets of increasing number of samples:
\begin{itemize}
    \item for $\mathsf{P}_{sup}^1$, we considered $N_{data}^1 = 5$, 
     selecting from $\mathsf{P}_{sup}$ the 5 outermost points.
    \item then, until depletion, we construct $\mathsf{P}_{sup}^i$ by adding $5 (2^{i-1} - 2^{i-2})$ snapshots to $\mathsf{P}_{sup}^{i-1}$, randomly sampling from $\mathsf{P}_{sup} \setminus \mathsf{P}_{sup}^{i-1}$.
\end{itemize}
Assembling the dataset in this way ensures consistency: for any $i \in \{1,2,3,4\}$ we have that $\min\{\mu \in \mathsf{P}_{sup}^i\} = \min\{\mu \in \mathcal{P}_{sup}\}$ and $\max\{\mu \in \mathsf{P}_{sup}^i\} = \max\{\mu \in \mathcal{P}_{sup}\}$. We display in Fig.~\ref{fig:eikonal_sampling} a realization of the proposed sampling technique.  
\begin{figure}[ht!]
    \centering
    \includegraphics[width=0.99\textwidth]{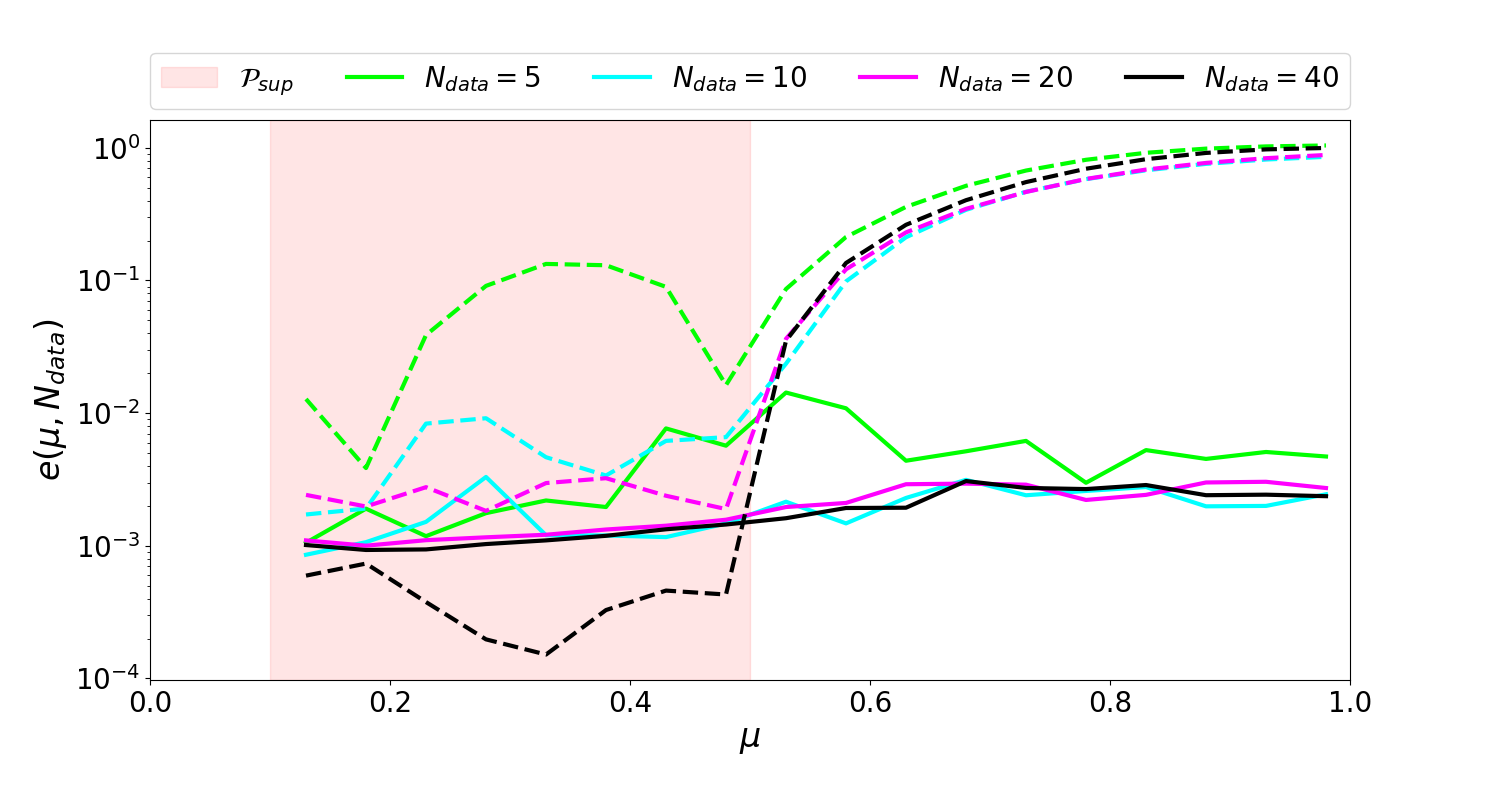}
    \caption{Eikonal equation: results of the ablation study. The continuous line refers to the novel PTPI-DL-ROM paradigm, whereas the dashed line concerns the data-driven POD-DL-ROM.}
    \label{fig:eikonal_sampling}
\end{figure}
Then, for any dataset $\mathsf{P}_{sup}^i$, we train both the PTPI-DL-ROM and the POD-DL-ROM with the training specifics characterized beforehand. Moreover, we compare the accuracy of the two architectures measuring the \textit{per-snapshot} relative error over the test set, displaying the results in Fig.~\ref{fig:eikonal_acc}. In particular, we note that:
\begin{itemize}
    \item[(i)] PTPI-DL-ROMs compensate for a shortage of supervised data in the interpolation regime. Indeed, we when performing an interpolation task, as the number of supervised data decreases, the discrepancy between the PTPI-DL-ROMs and the data-driven POD-DL-ROMs, in terms of test set accuracy, increases;
    \item[(ii)] as the amount of supervised data increases, the PTPI-DL-ROM approach and the data-driven POD-DL-ROM reach extremely similar results when dealing with interpolation tasks. In particular, this validates the residual computation since it demonstrates a compatibility between the supervised data and the calculation of the residual, thus suggesting that they convey the same information;
    \item[(iii)] in the extrapolation regime we notice that error diverges in the case of the data-driven paradigm. As expected, moving further from the supervised domain yields worse accuracies. On the other hand, we emphasize PTPI-DL-ROM extrapolation capabilities: if suitably trained, the PTPI-DL-ROM approach achieves an almost constant accuracy on the entire test set. 
\end{itemize}

\begin{figure}[ht!]
    \centering
    \includegraphics[width=0.99\textwidth]{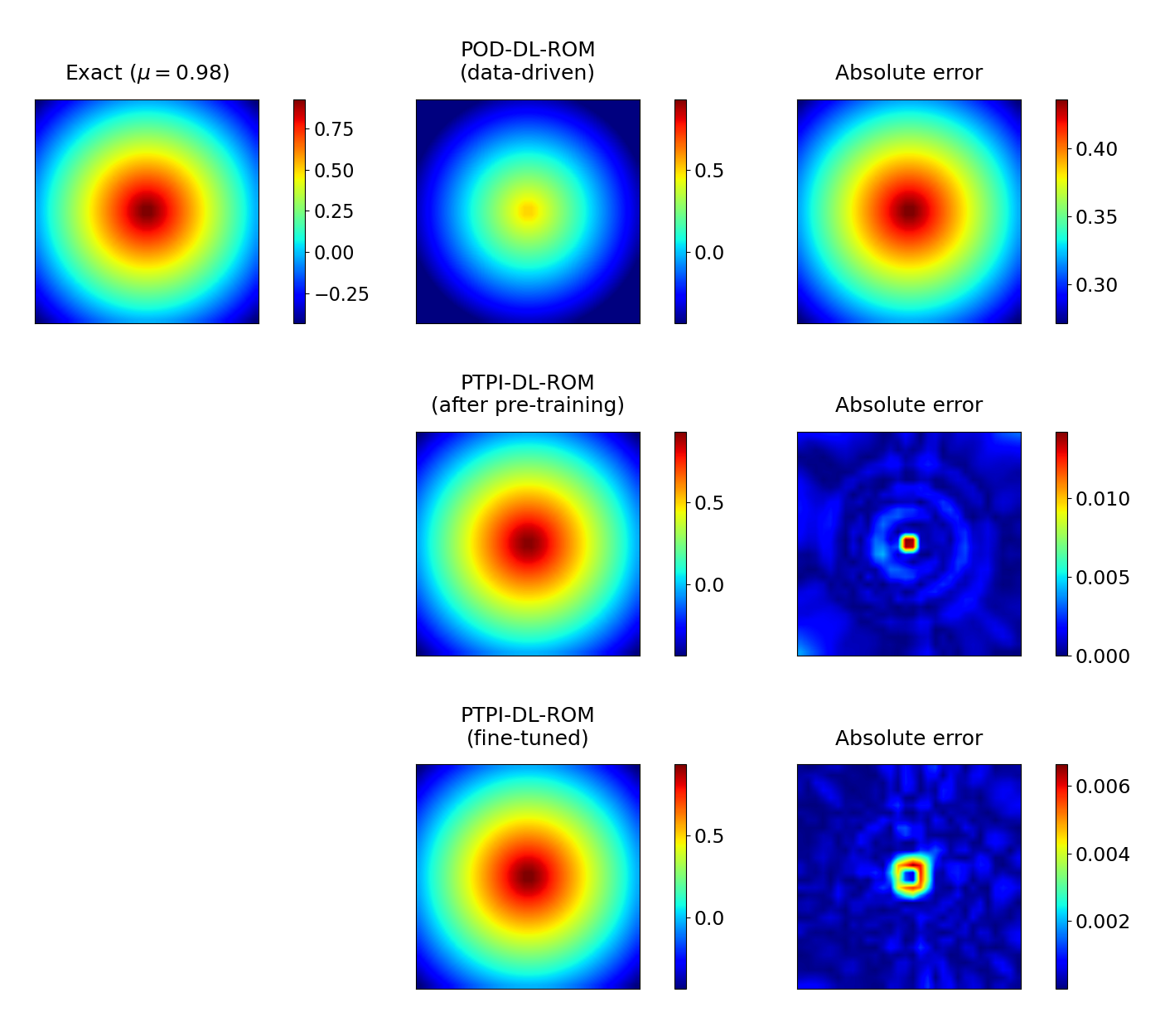}
    \caption{Eikonal equation: comparison between data-driven POD-DL-ROMs and the novel PTPI-DL-ROM for an instance that does not belong to the supervised training parameter space, namely, $\mu = 0.98 \in \mathcal{P}_{test} \setminus \mathcal{P}_{sup}$.}
    \label{fig:eikonal_acc}
\end{figure}
The remarkable extrapolation capabilities of the PTPI-DL-ROM paradigm are demonstrated also in Fig.~\ref{fig:eikonal_acc}. Specifically, we select the parameter instance as follows, $\mu = 0.98 \in \mathcal{P}_{test} \setminus \mathcal{P}_{sup}$. Then, we compare the prediction of data-driven POD-DL-ROMs with two results obtained with the novel PTPI-DL-ROM architecture (after the pre-training and after the fine-tuning procedure, respectively). We note that a total absence of supervised data in a neighborhood of $\mu = 0.98$ prevents the data-driven POD-DL-ROM to provide reliable results. On the other hand, the PTPI-DL-ROM predictions after pre-training are already adequately accurate since $e_{POD} \le 1 \times 10^{-6}$, namely, the POD basis is representative also of the extrapolation regime. Nonetheless, the simultaneous training of the branch net and the trunk net performed during the fine-tuning is able to further enhance the accuracy of a factor higher than 2 -- from $\mathcal{E}(N_{data} = 40) = 3.6 \times 10^{-3}$ after pre-training to $\mathcal{E}(N_{data} = 40) = 1.6 \times 10^{-3}$ after fine-tuning.

\subsection{Advection Diffusion Reaction Equation}

This second test case deals with a linear time-dependent advection-diffusion-reaction equation set on a three-dimensional domain, that depends nonaffinely on a vector of input parameters. Here we focus on showcasing the capability of the novel architecture to handle complex parameter-extrapolation tasks for 3D problems with high parametric variability, as well as  
the computational efficiency of the pre-training procedure of PTPI-DL-ROMs.

\smallskip
\noindent\textbf{Problem description.} The problem formulation reads as follows:
\begin{equation}
     \left\{
    \begin{array}{rll}
    \displaystyle
    \frac{\partial{u}}{\partial{t}} - 0.05 \Delta u + 0.05u + a(t)\frac{\partial{u}}{\partial{x}} &= f(x,y,z;\bm{\mu}), & \mbox{in } \Omega\times[0,T] \smallskip \\
    u &= 0, & \mbox{on }\partial{\Omega} \smallskip  \\
    u(t=0) &= 0, & \mbox{in } \Omega,
    \end{array}
    \right.
\end{equation}
where $\Omega = (0,1)^3$ is the spatial domain, $T = 7 \pi$, while the unknown $u = u(x,y,z, \bm{\mu}, t)$ -- which might represent the concentration of a chemical species -- is subject to the action of the time-dependent advection field $a(t) = \log(0.1t)$. Moreover, we choose as source term the $\bm{\mu}$-dependent function $f(x,y,z;\bm{\mu}) = e^{3xyz} \sin(\pi \mu_1 x) \sin(\pi \mu_2 y)$, showing a non-affine dependence on $\bm{\mu}=(\mu_1,\mu_2)$. 

\newpage
\noindent\textbf{Data collection and architectures.}
In order to collect the FOM snapshots, we spatially discretize the problem using a mesh made by $N_h=14075$ vertices and  formulate the semi-discrete high-fidelity problem by means of $\mathbb{P}^1$-FEM, using the \texttt{redbkit} library \cite{negri2016redbkit}. We opt for a BDF2 time advancing scheme with a constant time step $\Delta t = \pi / 10$ for a total of $N_t = 70$ time steps. Both the train and test samples refer to the same time set, namely $\mathsf{T}_{sup} = \mathsf{T}_{test} = \{\pi / 10 +  \pi / 10k, k=0,\ldots,N_t\}$. The parameter training instances are represented by the set $\mathsf{P}_{sup} = \{0.5 + 0.125k, k = 0, \ldots, 4\}^2 \subset [0.5,1]^2$, while the test parameter instances are given by $\mathsf{P}_{test} = \{0.55 + 29/140 k, k = 0, \ldots, 7\}^2 \subset [0.5,2]^2$. We define the supervised time-parameter space as $\mathcal{P}_{sup} \times [0,T] \supset \mathsf{P}_{sup} \times \mathsf{T}_{sup}$, and the test time-parameter space as $\mathcal{P}_{test} \times [0,T] \supset \mathsf{P}_{test} \times \mathsf{T}_{test}$.
We mention that, aiming to capture the variability of the solution manifold with sufficient accuracy, we choose $N = 15$ and $n = 8 \ge p + 1 = 3$. Then, we design our architecture as follows: the trunk network consists of $6$ hidden layers of $100$ units each with a sinusoidal nonlinearity; the branch encoder, decoder and reduced network are endowed with $4$ hidden layers of $50$ neurons each; the ELU activation function is used.

\smallskip
\noindent\textbf{Training specifics.}
The PTPI-DL-ROM training consists of a pre-training phase for the trunk network of $1000$ epochs, with a batch size of $100$ and a learning rate of $1 \times 10^{-3}$. Then, we pre-train the branch net with $2000$ gradient descent epochs employing $1 \times 10^{-3}$ as learning rate. Finally, we fine-tune the whole network with $4000$ epochs, by lowering the learning rate to $1 \times 10^{-4}$.
Since the present numerical experiment tackles an extrapolation task, we select $\mathcal{P}_{res} \times \mathcal{T}_{res} = [0.4,2.1]^2 \times [0,7.1\pi]$. 
In the loss function, the four contributions are weighted by $\omega_{N} = \omega_{n} = \omega_{\Omega} = 0.5$, $\omega_{\partial \Omega} = 50$, setting the resampling frequency of the unlabeled dataset $\mathsf{D}_{res} = \{(\muv,t)_j \in \mathcal{U}(\mathcal{P}_{res} \times \mathcal{T}_{res})\}_{j=1}^{100}$ to $5$ epochs.
We stress that in the pre-training phase we choose the mesh points as spatial collocation points for the residual computation; on the other hand, at the fine-tuning stage the PDE residual is enforced at $300$ uniformly sampled spatial points in $\Omega$, whereas the boundary conditions are still enforced at the boundary mesh points. We emphasize that the number of spatial collocation points needed for the evaluation of the residual is much lower than the amount of dofs of the high-fidelity labeled data.
Moreover, unless otherwise stated, the batch size is set to $30$ for the supervised data and to $10$ for the unlabeled data. 

Aiming at assessing the effectiveness of our proposed pre-training algorithm, we compare the PTPI-DL-ROM algorithm with two different training strategies. Specifically, we employ the same architecture and configuration as before, for the sake of fairness, considering

\begin{itemize}
    \item The \textit{w/o pre-training} approach, which entails a basic optimization procedure that does not involve any kind of pre-training. Indeed, this strategy implies only the minimization with respect to the physics-informed loss functional described in \eqref{eq:loss_ptpi}, employing $10000$ epochs of Adam and $1 \times 10^{-4}$ as learning rate. We remark that this approach was originally conceived in the context of DeepONets \cite{wang2021learning}.
    \item  The \textit{vanilla pre-training} approach \cite{zhu2023reliable}, which entails { \em (i) } a pre-training phase that simultaneously optimizes both the branch net and the trunk net with respect to a data-driven loss formulation (see Eq. \eqref{eq:loss_deeponet_supervised}), employing only the available labeled data, thus without any physics-based constraint, and { \em (ii)} a fine-tuning stage based on the physics-informed loss functional \eqref{eq:loss_ptpi}. We remark that the pre-training consists of a total of $3000$ epochs with a learning rate of $1 \times 10^{-3}$ and that we fine-tune the architecture for $7000$ epochs, using a lower learning rate ($1 \times 10^{-4}$).
\end{itemize}
We summarize the main features of the \textit{w/o pre-training}, the \textit{vanilla pre-training}, and the PTPI-DL-ROM optimization strategies in Table \ref{tab:comparison-pre-training}.

\begin{table}[ht!]
    \centering
    \begin{tabular}{|c|c|c|c|}
    \hline
         & \textit{w/o pre-training} & \textit{vanilla pre-training} & PTPI-DL-ROM\\
         \hline
         pre-training & - & data & data+physics \\
         fine-tuning & data+physics & data+physics & data+physics\\
         \hline
    \end{tabular}
    \caption{Comparison between the \textit{w/o pre-training} approach, the \textit{vanilla pre-training} strategy and the PTPI-DL-ROM paradigm.}
    \label{tab:comparison-pre-training}
\end{table}

Finally, for the sake of the comparison of the novel PTPI-DL-ROM against the data-driven POD-DL-ROM, we train this latter with $2000$ epochs of gradient descent and a learning rate of $10^{-3}$.

\begin{figure}[htb!]
    \centering
    \includegraphics[width=0.99\textwidth]{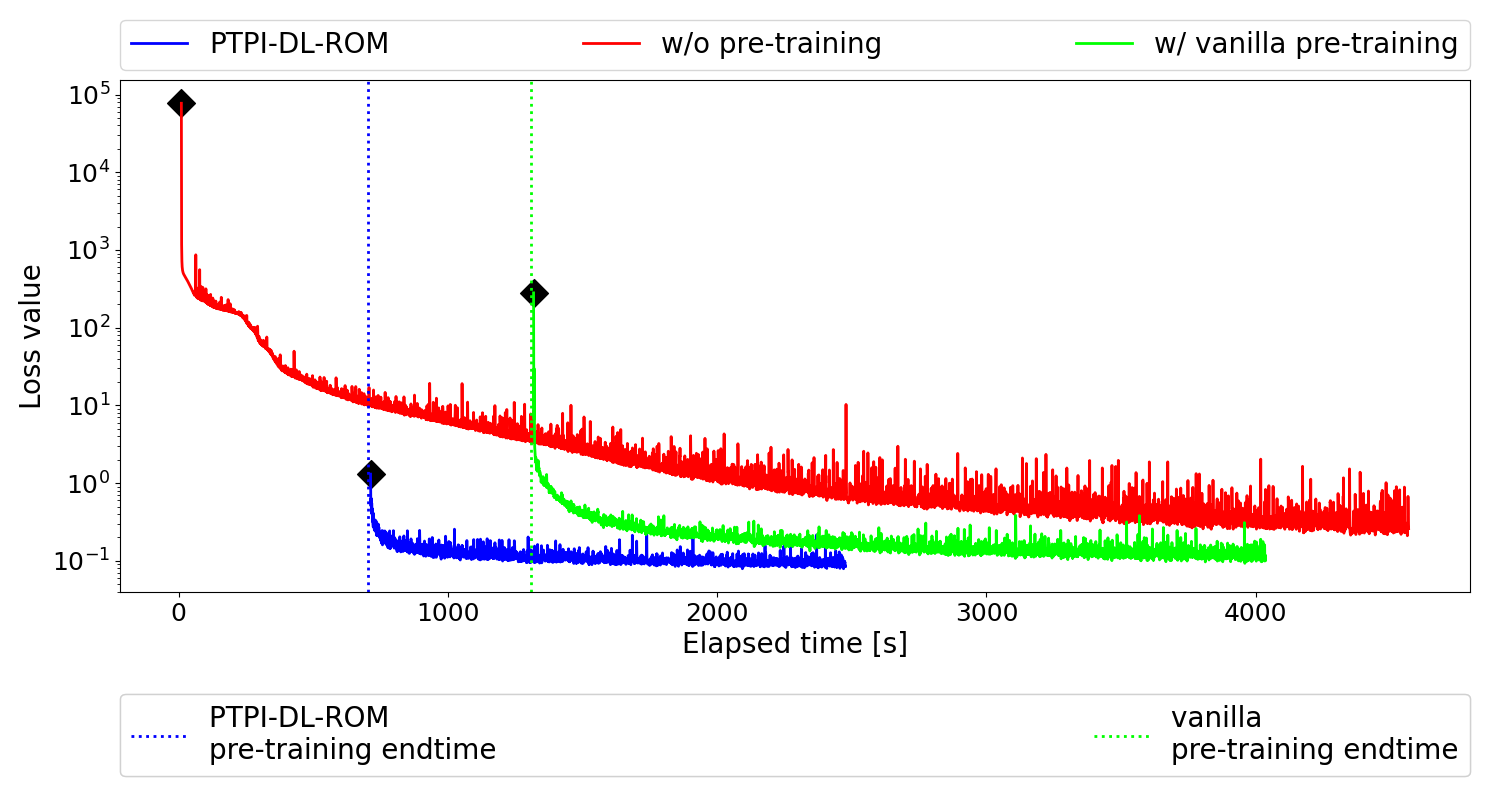}
    \caption{Advection Diffusion Reaction equation: comparison between different pre-training strategies in terms of their effect on the fine-tuning training loss decay. The relative error metric for the PTPI-DL-ROM paradigm is $\mathcal{E}(N_{data}) = 1.3 \times 10^{-1}$, whereas both the non-pre-trained approach and the vanilla approach result in worse accuracy ($\mathcal{E}(N_{data}) = 2.2 \times 10^{-1}$ and $\mathcal{E}(N_{data}) = 1.6 \times 10^{-1}$, respectively).}
    \label{fig:adr_training}
\end{figure}

\smallskip
\noindent\textbf{Results analysis.} The strong influence of the (nonaffinely parametrized) source term on the problem solution, as well as the presence of a time-dependent advection field, makes this problem a good test bed to compare the novel PTPI-DL-ROM paradigm and its data-driven counterpart. Indeed, for this test case, the \textit{per-snapshot} error indicator $e(\mu,N_{data})$ shows a high sensitivity depending on whether the spatial frequency effects are suitably captured. Indeed, as shown in Fig.~\ref{fig:adr_visual} and Fig.~\ref{fig:adr_err}, data-driven POD-DL-ROMs fail to achieve a reliable solution for sample instances that fall beyond the region from where labeled data are available. On the other hand, the proposed PTPI-DL-ROM architecture is able to return remarkably faithful reproductions of the test samples belonging to both the interpolation and the extrapolation regime. Specifically, Fig.~\ref{fig:adr_err} shows that in the interpolation regime the fine-tuned PTPI-DL-ROM integrates the knowledge available from the labeled data with the information about the underlying physics to reach an outstanding precision. Nonetheless, the PTPI-DL-ROM mitigates the absence of data with physics in the regions where labeled data are not available, thus providing reliable solutions in the extrapolation regime,  too. 

Ultimately, we can investigate the effectiveness of our proposed pre-training strategy: from Fig.~\ref{fig:adr_training} we notice that, in general, employing a pre-training strategy is essential to speed up the convergence to suitable minima. Nevertheless, the proposed PTPI-DL-ROM pre-training -- among the options we considered -- offers the most convenient strategy. Indeed, the decoupled nature of the PTPI-DL-ROM pre-training ensures lower computational costs than the vanilla pre-training strategy; conversely, this latter entails the simultaneous training of branch and trunk. More remarkably, we emphasize that the loss value computed at the first epoch of the fine-tuning is lower in the PTPI-DL-ROM case: indeed, the vanilla pre-training strategy is purely data-driven and thus more sensible to small data regimes, whereas the PTPI-DL-ROM pre-training is based upon a loss functional that combines data and physics. As a final remark, the PTPI-DL-ROM loss decay is the least prone to oscillations and thus most stable among the three proposed approaches.

\begin{figure}[htb!]
    \centering
    \includegraphics[width=0.99\textwidth]{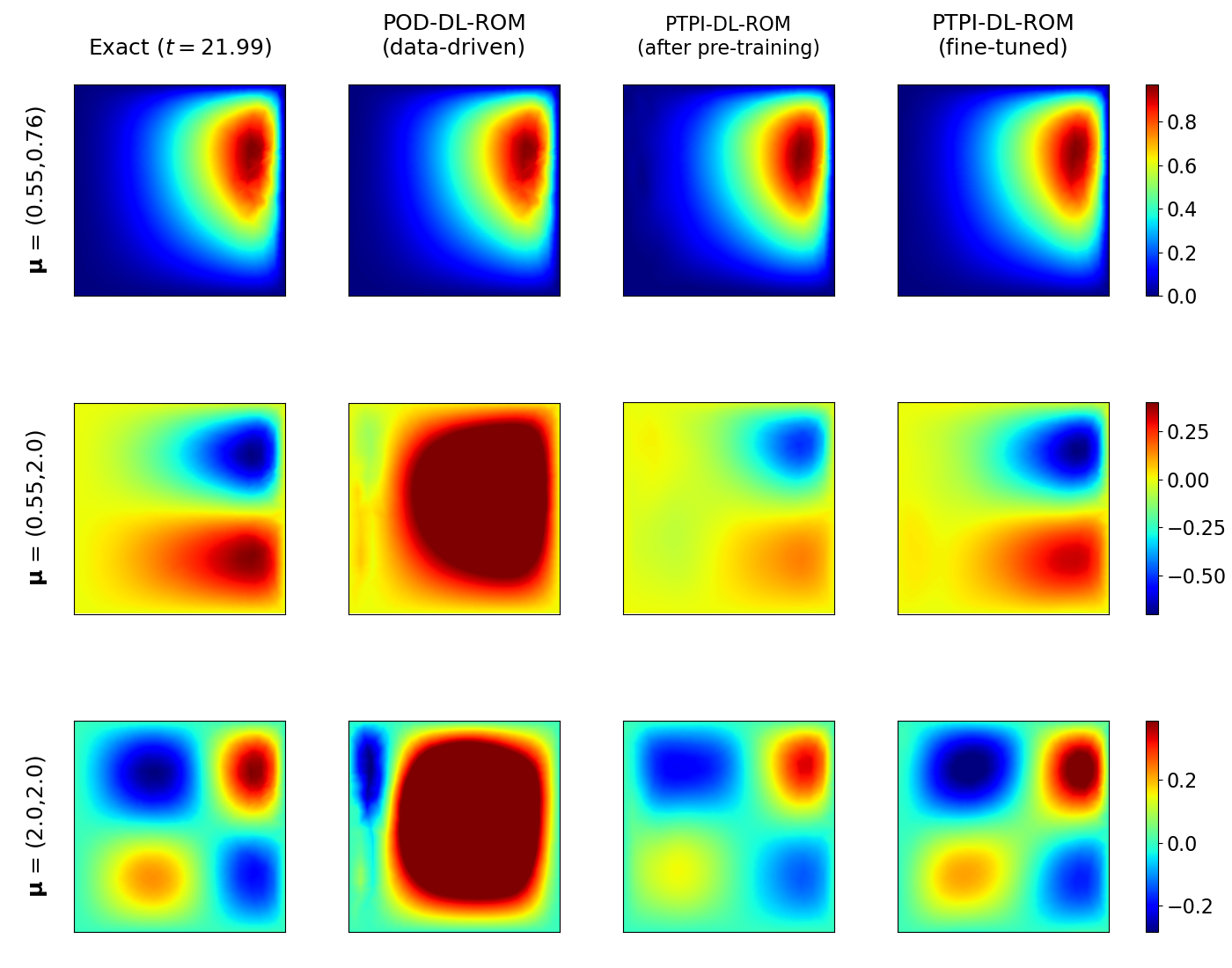}
    \caption{Advection Diffusion Reaction equation: comparison between the predictions obtained with data-driven POD-DL-ROM and the proposed PTPI-DL-ROM. The last two rows refers to instances pertaining to the extrapolation regime.}
    \label{fig:adr_visual}
\end{figure}

\begin{figure}[htb!]
    \centering
    \includegraphics[width=1.05\textwidth]{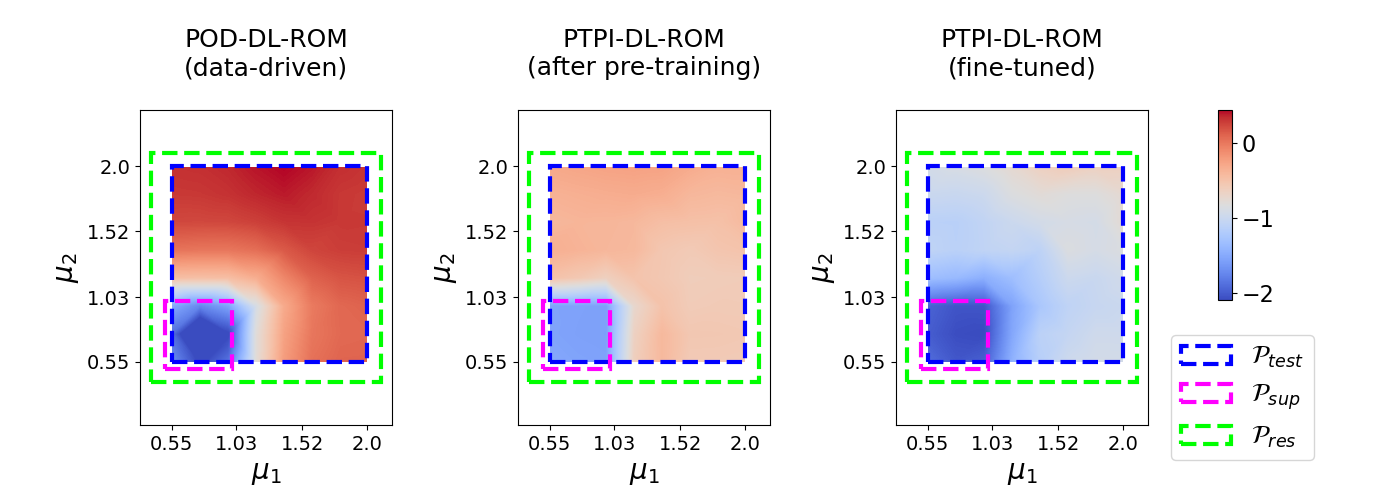}
    \caption{Advection Diffusion Reaction equation: visualization of $\log_{10}(e(\muv,N_{data}))$ for the data-driven POD-DL-ROM and the proposed PTPI-DL-ROM.}
    \label{fig:adr_err}
\end{figure}

\subsection{Darcy Equations}

This third test case aims at showing the capability of the proposed framework to both handle non-trivial vector problems and tackle complex parameter extrapolation tasks. 

\smallskip
\noindent\textbf{Problem description.} We consider the Darcy equations to describe a flow in a porous medium. In mixed formulation, the problem reads as follows:
\begin{equation}
\label{eq:darcy-problem}
    \left\{
    \begin{array}{rll}
    \displaystyle
    \kappa(\muv)^{-1} \bm{\sigma} - \nabla p &= 0 ,& \mbox{in } \Omega\\
    \nabla \cdot \bm{\sigma} + 1 &= 0 ,& \mbox{in } \Omega\\
    p &= 0,& \mbox{in } \Gamma_p\\
    \bm{\sigma} \cdot \mathbf{n} &= 0,& \mbox{in } \Gamma_\sigma,
    \end{array}
    \right.
\end{equation}
where the computational domain is $\Omega = (0,3) \times (0,1)$, whereas the domain boundary is divided into $\Gamma_p = [0,3] \times \{0,1\}$ and $\Gamma_\sigma = \partial \Omega \setminus \Gamma_p$.
We highlight that the problem is parameterized by means of the nonaffine permeability function $\kappa = \kappa(x,y;\muv) = \exp(\sin(\mu_1 \pi x) + \sin(\mu_2 \pi y))$ and that $\mathbf{n}$ denotes the outward normal unit vector.

\smallskip
\noindent\textbf{Data collection and architectures.}
We collect a total of $N_{data} = 196$ supervised snapshots corresponding to the following choice of the parameter train dataset $\mathsf{P}_{sup} = \{(1.0 + j/26, 1.0 + k/13), \mbox{ for }j,k=0,\ldots,13\} \subseteq [1.0,1.5] \times [1.0,2.0] = \mathcal{P}_{sup}$. To generate the supervised data, we discretize the problem \eqref{eq:darcy-problem} by means of the Finite Element method. In particular, to ensure inf-sup stability, we choose to employ Brezzi-Douglas-Marini elements of degree $1$ for the flux $\bm{\sigma}$, whereas we select discontinuous elements of degree $0$ for the pressure $p$. We remark that the resulting discretized system features $31756$ dofs, but, in order to avoid retaining redundant information, we choose to evaluate the solution fields only at the mesh vertices, thus yielding $N_h=12213$. 
We generate the test dataset by means of the same solver and configuration as the training dataset, by sampling the test parameter instances from $\mathsf{P}_{test} = \{(1.0 + j/26, 1.0 + k/13), \mbox{ for } j,k=0,\ldots,13\} \subset [1.0,1.5] \times [1.0,2.0] = \mathcal{P}_{test}$. We highlight the complexity of the parameter extrapolation task: we have that $\mathcal{P}_{sup} \subsetneq \mathcal{P}_{test}$ and $|\mathcal{P}_{test}| = 4 \times |\mathcal{P}_{sup}|$.
We select $N=30$ POD modes per channel, whereas the latent dimension is set to $n=7$, thus enabling suitable linear and nonlinear compressions of the solution manifold, respectively. 

Then, we construct the neural network core as follows: {\em (i) } the trunk network is a Fourier-features-enhanced dense network of $6$ hidden layers consisting of $100$ neurons each with a SILU nonlinearity, {\em (ii) } the branch encoder is a ELU-dense network of $4$ hidden layers of $50$ neurons each, {\em (iii) } the branch reduced network consists of $6$ hidden layers of $50$ neurons each with a ELU activation, and {\em (iv) } the branch decoder is composed of $6$ hidden layers of $70$ units each and is endowed with a ELU activation.

\smallskip

\noindent\textbf{Training specifics.}  Since the present numerical experiment entails an extrapolation task ($\mathcal{P}_{sup} \subsetneq \mathcal{P}_{test}$), we suitably sample the unlabeled dataset for the physics-based soft constraint as follows $\mathsf{D}_{res} = \{\muv_j \in \mathcal{U}(\mathcal{P}_{res})\}_{j=1}^{N_{res}}$, where $\mathcal{P}_{res} = [0.9,2.1] \times [0.9,3.1] \supset \mathcal{P}_{test}$. For the pre-training phase, we impose the residual at the mesh points, while at the fine-tuning stage we choose to sample $300$ collocation points for the imposition of the PDE residual from a uniform distribution over $\Omega$, and we enforce the boundary conditions at the boundary mesh points. We weight the loss contributions of the four terms of the loss formulation \eqref{eq:loss_ptpi} with the choice $\omega_N=\omega_n=\omega_{\Omega}=0.5$, $\omega_{\partial \Omega}=50$. 
The trunk net pre-training consists of 600 epochs, with a learning rate of $1 \times 10^{-3}$ and a batch size of 10. Then, we pre-train the branch net for $2000$ epochs, with a learning rate of $1 \times 10^{-3}$, and a batch size of $10$ for both the supervised and the unsupervised datasets, resampling $\mathsf{D}_{res}$ each $50$ epochs. Finally, we fine-tune the architecture for $1500$ epochs, decreasing the learning rate to $3 \times 10^{-4}$ and diminishing the resampling frequency for $\mathsf{D}_{res}$ to $5$. For the sake of comparison, we stress that the data-driven POD-DL-ROM is trained for $600$ epochs, employing a learning rate of $1 \times 10^{-4}$ and a batch size of $10$.

\noindent\textbf{Results analysis.} According to the \textit{per-snapshot} relative errors reported in Fig.~\ref{fig:darcy_err}, we can remark that:
\begin{itemize}
    \item The data driven POD-DL-ROM performs well in the interpolation regime; on the other hand, moving further beyond the interpolation regime we obtain more and more skewed predictions.
    \item The pre-training phase of PTPI-DL-ROM mitigates the absence of data in the extrapolation regime thanks to its physics-informed loss formulation, however the error remains still high beyond the interpolation regime. This is due to the fact that $e_{POD} = 1.5 \times 10^{-1}$ is rather large: the POD modes are not adequately representative of the entire test parameter space.
    \item The fine-tuned PTPI-DL-ROM further refines the pre-trained architecture, thus enhancing the faithfulness to the high-fidelity solution: indeed, the simultaneous physics-informed training of the branch and the trunk networks enables an adequate low-rank representation also beyond the interpolation regime. We emphasize that the fine-tuned PTPI-DL-ROM outperforms the data-driven POD-DL-ROM in terms of interpolation accuracy, since it integrates data and physics in the interpolation regime. Nonetheless,  PTPI-DL-ROMs are reliable even in the regions of the parameter space where data is completely missing, in contrast to data-driven POD-DL-ROMs.
\end{itemize}

\begin{figure}[h!tb]
    \centering
    \includegraphics[width=1.05\textwidth]{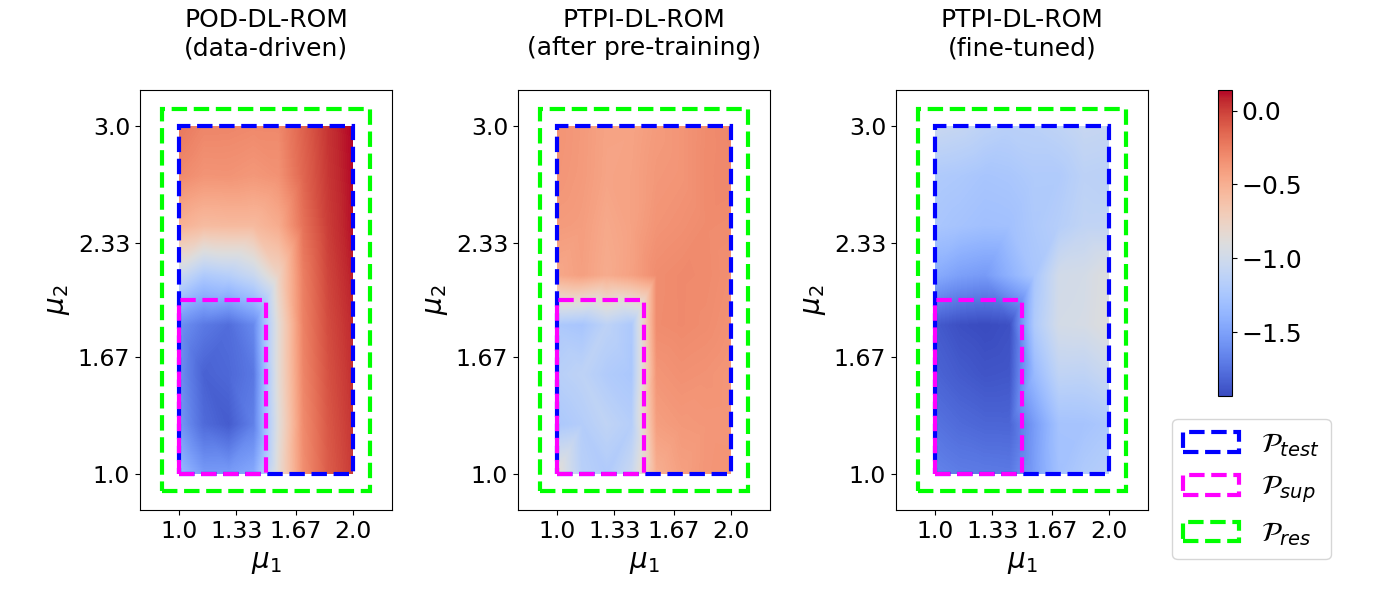}
    \caption{Darcy equations: visualization of $\log_{10}(e(\muv,N_{data}))$ for the data-driven POD-DL-ROM and the novel PTPI-DL-ROM.}
    \label{fig:darcy_err}
\end{figure}

\begin{figure}[h!tb]
    \centering
    \includegraphics[width=0.975\textwidth]{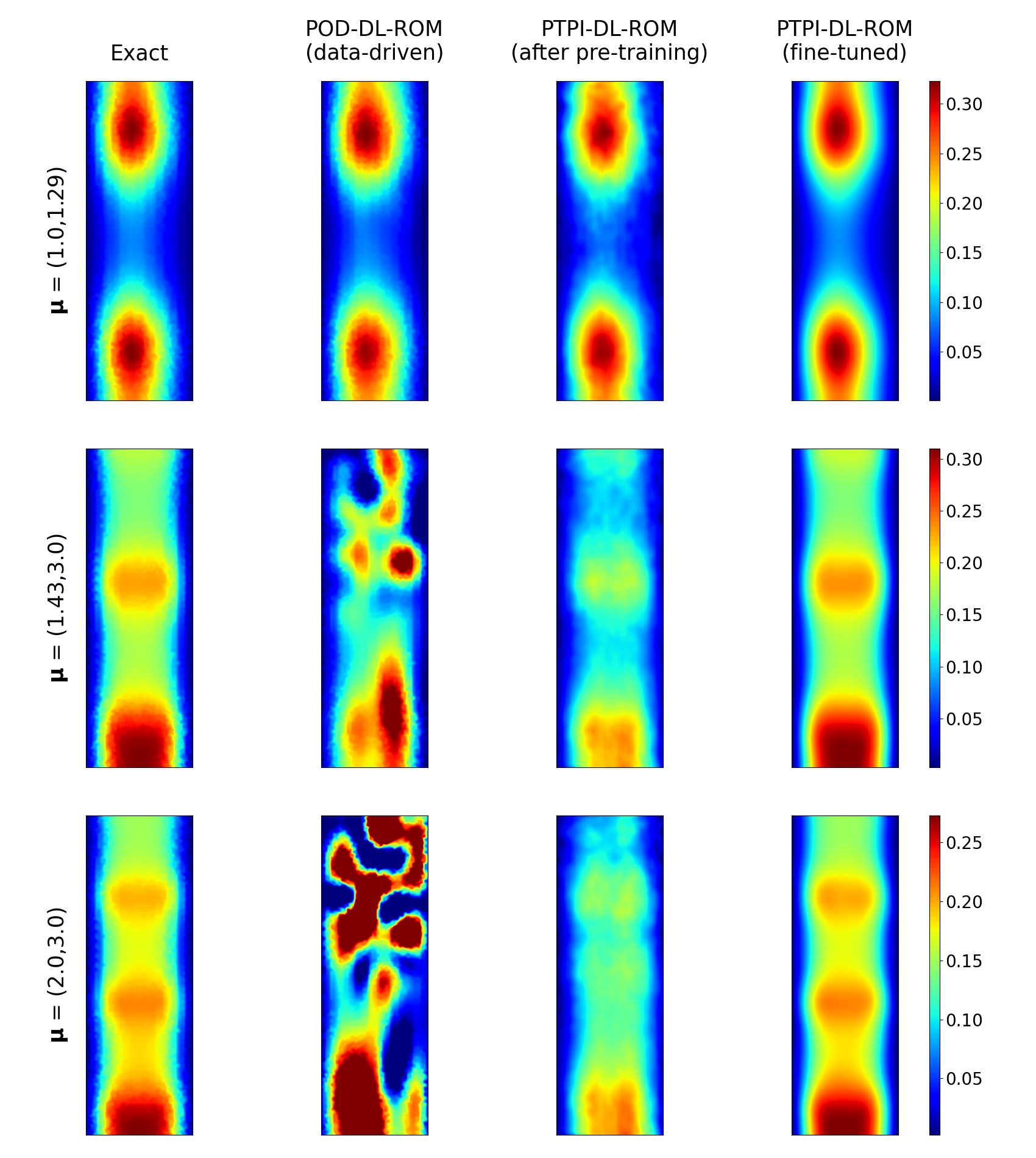}
    \caption{Darcy equations: visualization of the pressure field $p$ in the $yx$ plane for three different instances of the parameter $\muv$. We remark that the last two instances belong to the extrapolation regime.}
    \label{fig:darcy_pres}
\end{figure}

\begin{figure}[h!tb]
    \centering
    \includegraphics[width=0.975\textwidth]{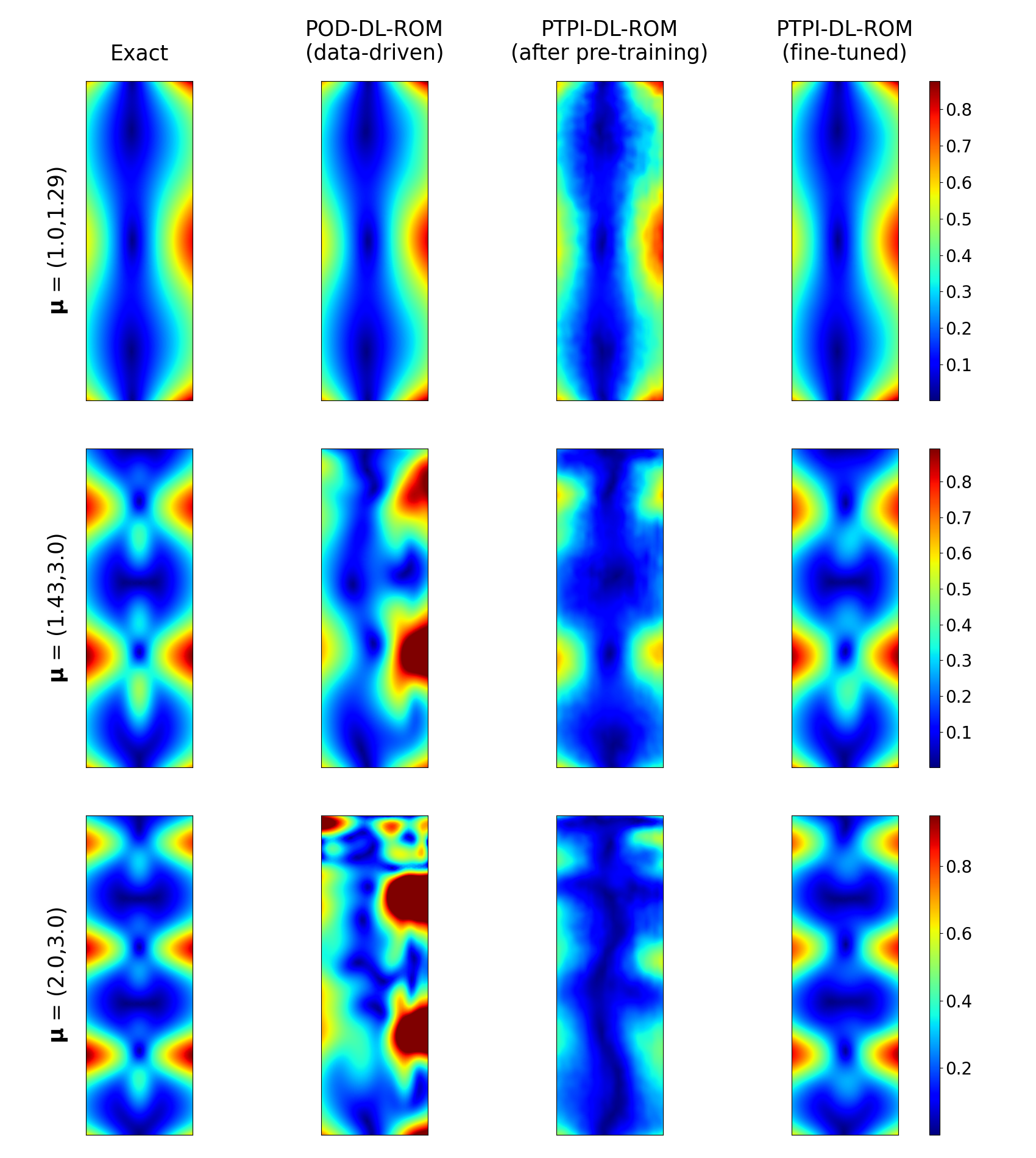}
    \caption{Darcy equations: visualization of the flux magnitude $\|\bm{\sigma}\|$ in the $yx$ plane for three different instances of the parameter $\muv$. We remark that the last two instances belong to the extrapolation regime.}
    \label{fig:darcy_flux}
\end{figure}

Our analysis is further supported by the visualization of the pressure field and the flux magnitude, reported in Fig.~\ref{fig:darcy_pres} and Fig.~\ref{fig:darcy_flux}, respectively. Indeed, we highlight that both POD-DL-ROMs and the fine-tuned PTPI-DL-ROMs are reliable in the interpolation regime (first row). On the contrary, in the extrapolation regime (second and third row) the fine-tuned PTPI-DL-ROM is extremely accurate, whereas the data-driven POD-DL-ROM's predictions are skewed.

As a final remark, we emphasize that the physics-based loss functional enables an evident spatial regularization of the pressure field, therefore enhancing the overall solution quality, compared to the high-fidelity solution; this latter is indeed less smooth since it relies on a  $\mathbb{P}^0$-DG discretization.

\subsection{Navier-Stokes Equations}

Within this last numerical experiment we tackle the case of parameter-dependent incompressible Navier-Stokes equations in a two-dimensional domain. Specifically, we deal with the simulation of the fluid flow induced by a prescribed inlet through a backward-facing step geometry. Here, we aim at demonstrating the capability of the proposed physics-informed strategy to handle a time extrapolation task in a nonlinear time-dependent vector problem featuring a more complex - and parameter-dependent - physics.

\smallskip
\noindent\textbf{Problem description.} We consider the formulation of a Navier-Stokes flow in its adimensional formulation, namely,
\begin{equation}
    \left\{
    \begin{aligned}
    \frac{\partial{\bm{u}}}{\partial{t}} + (\bm{u}\cdot \nabla) \bm{u} - \frac{1}{Re} \Delta \bm{u} + \nabla p &= 0 ,& \mbox{in }\Omega \times (0,T) \\
    \nabla \cdot \bm{u} &= 0,& \mbox{in } \Omega \times (0,T) \\
    \bm{u} &= \bm{u}_D(t) ,& \mbox{on }\Gamma_{IN} \times (0,T)\\
    \bm{u} &= \bm{0} ,& \mbox{on }\Gamma_{WALL} \times (0,T) \\
    \nabla u \mathbf{n} - p \mathbf{n} &= \bm{0} ,& \mbox{on }\Gamma_{OUT} \times (0,T) \\
    \bm{u}(t=0) &= \bm{0} ,& \mbox{in } \Omega,
    \end{aligned}
    \right.
\end{equation}
where  $\mathbf{n}$ denotes the outward normal unity vector and $Re>0$ the Reynolds number. The computational domain is $\Omega = ((0,4) \times (0,1)) \setminus ((0,1) \times (0,0.5))$. We define the inlet boundary as $\Gamma_{IN} = \{ (x,y) \in \partial{\Omega}: x = 0, y \in [0.5,1]\}$, the outlet boundary as $ \Gamma_{OUT} = \{ (x,y) \in \partial{\Omega}: x = 4, y \in [0,1] \}$, and  the internal wall $\Gamma_{WALL} = \partial{\Omega} \setminus (\Gamma_{IN} \cup \Gamma_{OUT})$. We force a time-dependent Dirichlet datum at the inlet, namely $\bm{u}_D(y;t) = [2 (1-e^{-3t}) (y - 0.5) (1 - y) , 0]$, while we employ a no-slip condition at the wall and endow the outlet with natural boundary conditions. We highlight that the problem is parametrized by means of the Reynolds number $Re$: therefore, for the sake of consistency with the previously employed notation, we set $\mu = Re$.

\smallskip
\noindent\textbf{Data collection and architectures.} We generate high-fidelity snapshots by employing an inf-sup stable couple of Finite Element spaces, namely $\mathbb{P}^1\textnormal{b}-\mathbb{P}^1$, once the problem is suitably discretized on a mesh of $N_h = 1722$ vertices. Thus, the semi-discrete formulation features a system of 8166 dofs. Then, we employ the Chorin-Temam splitting scheme with an explicit treatment of every left-hand-side term. We collect a total of $N_s = 10$ training samples on a time grid characterized by $\Delta t = 0.05$, yielding $\mathsf{P}_{sup} = \{200+ 100/3k, \mbox{ for } k=0,\ldots,N_s-1\} \subset \mathcal{P}_{sup} = [200,500]$ and $\mathsf{T}_{sup} = \{0.05k, \mbox{ for } k=1,\ldots,20\} \subset \mathcal{T}_{sup} = [0,1]$ respectively. Then, we generate $N_s^{test} = 5$ test samples for $\mathsf{P}_{test} = \{210 + 70k, \mbox{ for } k=0,\ldots,N_s^{test}-1\} \subset \mathcal{P}_{test} \equiv \mathcal{P}_{sup} = [200,500]$. The considered test time grid is $\mathsf{T}_{test} = \{0.05k, \mbox{ for } k=1,\ldots,80\} \subset \mathcal{T}_{test} = [0,4]$; thus, we are dealing with a time extrapolation task since $\mathcal{T}_{sup} \subsetneqq  \mathcal{T}_{test}$. We stress that we employ $N=20$ modes per channel for the preliminary dimensionality reduction and a latent dimension of $n=5$. The latter choice ensures a suitable preliminary compression of the supervised data and the possibility to adequately capture the variability of the solution manifold at the same time. 

We then design the neural network core as follows: {\em (i)} the Fourier-features-enhanced trunk net consists of 6 hidden layers of 100 neurons each, and is endowed with a SILU activation, {\em (ii)} the branch encoder is a dense network of 4 hidden layers of 50 units each with a ELU nonlinearity, {\em (iii)} the branch decoder is a composed of 9 hidden dense layers of 100 neurons each activated by the ELU function, and {\em (iv)} the branch reduced network consists of 4 hidden dense layers of 100 units each, and is endowed with the ELU activation.

\smallskip
\noindent\textbf{Training specifics.} 
We pre-train the trunk net for $3000$ epochs with a batch size of $30$ and a learning rate of $3 \times 10^{-4}$. The branch net with $5000$ gradient descent epochs using a learning rate of $3 \times 10^{-4}$, employing a batch size of $30$ for the labeled data and of $10$ for the unlabeled data, respectively. Finally, we fine-tune the resulting PTPI-DL-ROM for $5000$ epochs: to do that, we lower the learning rate to $10^{-4}$.
Since $\mathcal{P}_{sup} \times \mathcal{T}_{sup} \subsetneq \mathcal{P}_{test} \times \mathcal{T}_{test}$, the present test case encompasses an extrapolation task. 

Thus, we compensate the small data regime by enforcing the physics-based constraint in $\mathcal{P}_{res} \times \mathcal{T}_{res} = [200,500] \times [0,4.1] \supset \mathcal{P}_{test} \times \mathcal{T}_{test}$. Specifically, we draw $\mathsf{D}_{res} = \{(\mu,t)_j \sim \mathcal{U}(\mathcal{P}_{res} \times \mathcal{T}_{res})\}_{j=1}^{100}$ each $5$ gradient descent epochs. Moreover, we remark that with respect to the pre-training stage, we choose to compute the physics-based residual at the mesh points. Nonetheless, while fine-tuning we decide to enforce the boundary conditions at the boundary mesh points, whereas the PDE residual is computed for $\{\mathbf{y}_i \sim \mathcal{U}(\{\mathbf{x}_l\}_{l=1}^{N_h})\}_{i=1}^{500} \cup \{\mathbf{y}_i \in \mathcal{U}([1,2]\times[0,0.5])\}_{i=1}^{100} \cup \{\mathbf{y}_i \in \mathcal{U}([0,1]\times[0.5,1])\}_{i=1}^{200}$.
We note that the weighting criterion for the loss formulation terms is more complex than in the previous experiments, since a series of complex equations is involved. Specifically, while we employ $\omega_N=\omega_n=0.5$ for every output channel for the labeled data, $\omega_{\Omega} = [10,10,10]$ assumes a vectorial form, being the first and the second term related to the momentum equation of the $x$-velocity and the $y$-velocity, respectively, whereas the third term controls the incompressibility constraint. Similarly, we select $\omega_{\partial \Omega} = [100,1000,1,1]$, being the first two terms related to the Dirichlet boundary conditions, and the remaining to the natural boundary conditions. For the sake of comparison, we emphasize that we train a data-driven POD-DL-ROM for a total of $5000$ epochs employing a learning rate of $3 \times 10^{-4}$.

\begin{figure}[htb!]
    \centering
    \includegraphics[width=0.99\textwidth]{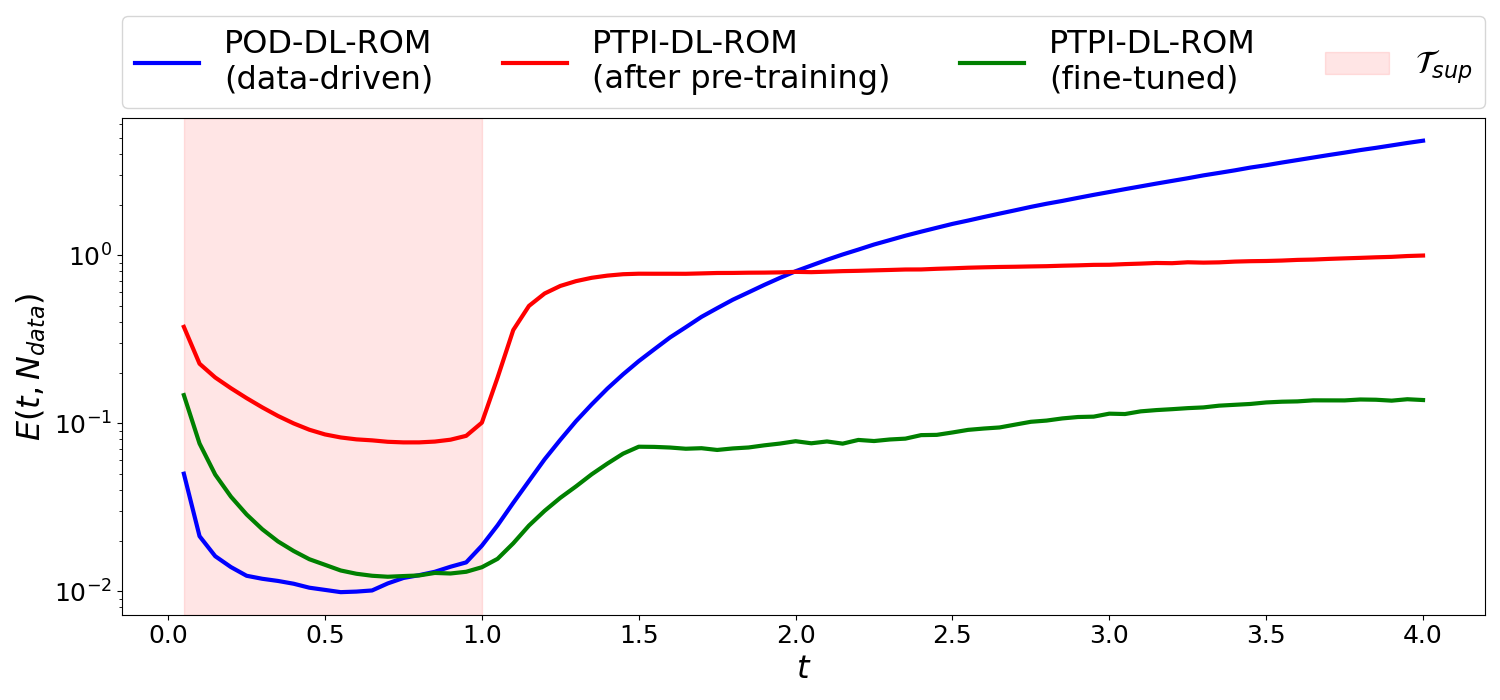}
    \caption{NS equations: visualization of $E(t,N_{data})$ for the comparison between data-driven POD-DL-ROM and the proposed PTPI-DL-ROM.}
    \label{fig:ns_err}
\end{figure}

\begin{figure}[htb!]
    \centering
    \includegraphics[width=0.99\textwidth]{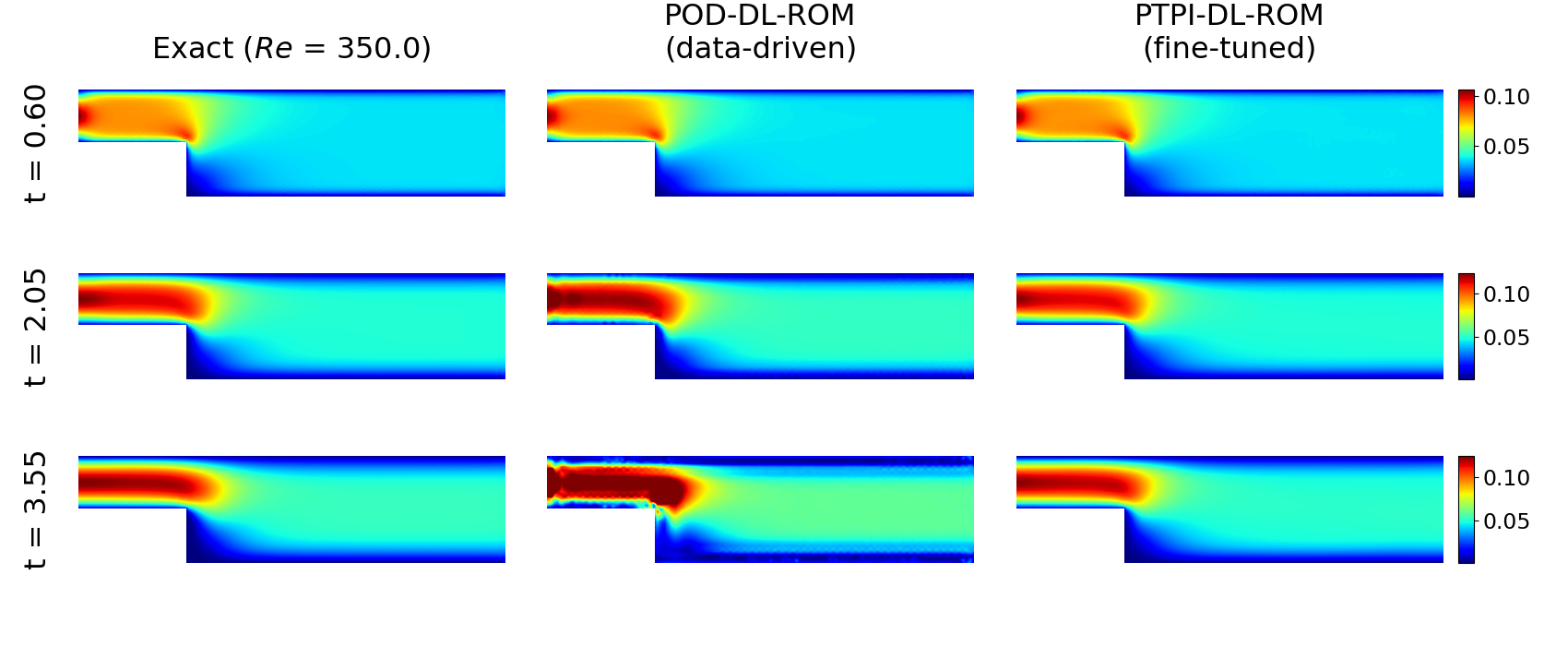}
    \caption{NS equations: comparison between the velocity magnitude accuracy of the data-driven POD-DL-ROM and fine-tuned PTPI-DL-ROM simulations at different time instances.}
    \label{fig:ns_visual_u}
\end{figure}

\begin{figure}[htb!]
    \centering
    \includegraphics[width=0.99\textwidth]{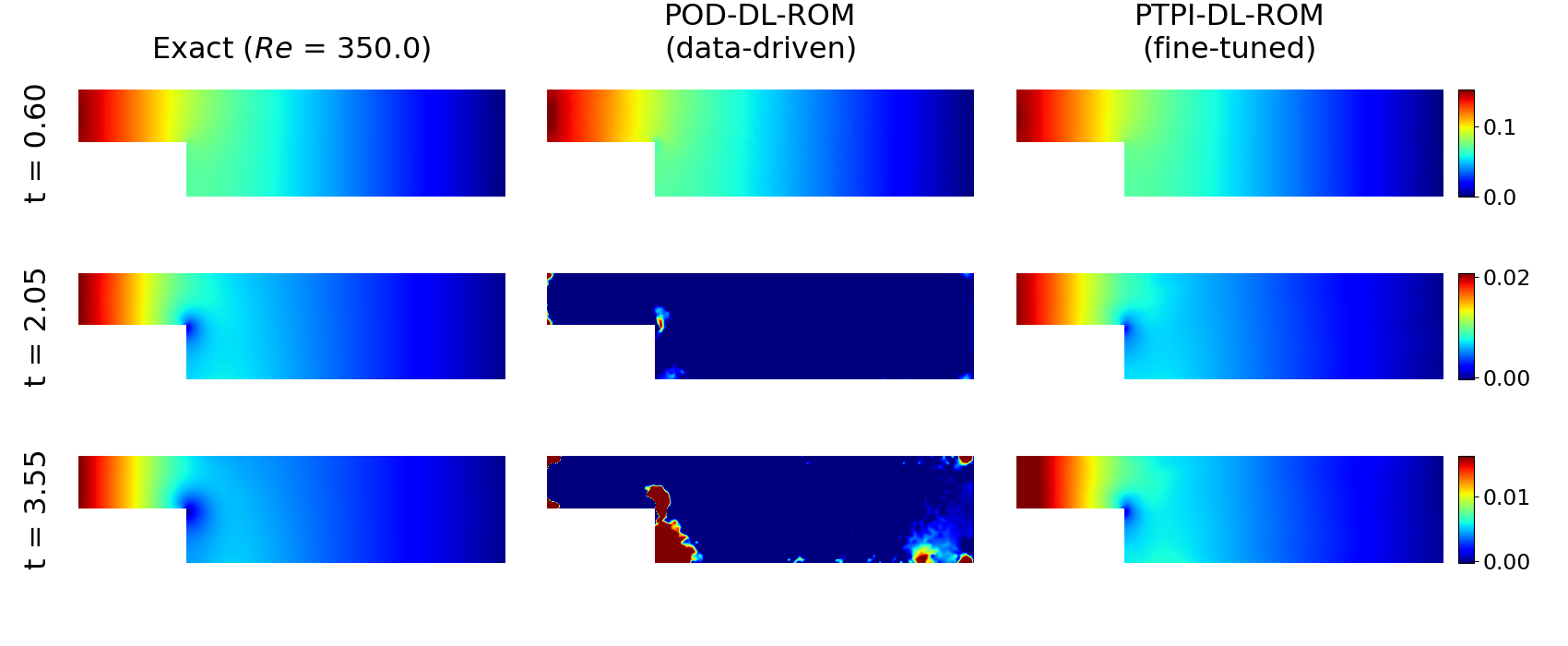}
    \caption{NS equations: comparison between the pressure accuracy of the data-driven POD-DL-ROM and fine-tuned PTPI-DL-ROM simulations at different time instances.}
    \label{fig:ns_visual_p}
\end{figure}

\smallskip
\noindent\textbf{Results analysis.} The present numerical experiment is employed for the validation of the time-extrapolation capabilities of PTPI-DL-ROM. In particular, we observe from Fig.~\ref{fig:ns_err} that the PTPI-DL-ROM showcases outstanding accuracy in the interpolation regime, i.e. for $t \in \mathcal{T}_{sup} = [0,T]$, where both data and physics are available. Nevertheless, we note that, even though the data-driven POD-DL-ROM is extremely accurate in the regions where labeled data are available, its reliability become skewed just beyond $\mathcal{T}_{sup}$ and the relative error steadily increases in the extrapolation regime. In contrast, the proposed PTPI-DL-ROM preserves a very good performance even for extrapolation samples, taking advantage of physics to compensate for the absence of supervised training data. Our analysis is further validated by Fig.~\ref{fig:ns_visual_u} and \ref{fig:ns_visual_p}, which emphasize the reliability of the PTPI-DL-ROM technique both in the interpolation regime (first row) and in the extrapolation regime (second and third row).

\section*{Conclusions}
The recent success of low-rank deep learning-based neural network architectures \cite{hesthaven2018,fresca2022poddlrom,lu2022comprehensive,lu2021learning} in the ROM field is due to their flexibility, their outstanding (and provable) approximation capabilities, as well as  their lightweight architectures. However, theoretical investigations and numerical experiments \cite{brivio2023error,bhattacharya2021,lanthaler2022error} involving these techniques have shown that their test accuracy strongly depends on the amount of available training data. This latter fact represents a huge drawback since very often the data generation phase is barely affordable because of the high computational burden entailed by the high-fidelity solvers, in terms of both computational time and resources, especially when dealing with complex problems.

Recent investigations  \cite{wang2021learning,safonova2023ten} have argued that labeled data and the underlying physical equations convey the same information, thus making possible to  learn parametric operators by minimizing the reconstruction error of the available training data and by enforcing a soft constraint based on the underlying physics in the regions of the time-parameter domain where training data are not accessible. 

Along this strand, we have proposed a novel framework based on a NN architecture that combines the best features of POD-DL-ROMs and DeepONets, two widely used low-rank deep learning-based architectures, to design a training strategy that significantly reduces the computational burden entailed by the computation of physics-based residual. Indeed, despite being extremely versatile, usually the computation of the strong continuous formulation of the residual through AD also becomes prohibitive: this further bottleneck is caused by the huge size of the computational graph entailed by AD, especially as the problem complexity increases and if the residual calculation requires the evaluation of high-order spatial derivatives. Thus, in order to mitigate such computational burden, and motivating our choices in light of the theory, we have devised a new affordable training algorithm consisting of {\em{(i)}} a lightweight pre-training phase that efficiently blends physics and data in order to approach an adequate minimum in the loss landscape, and {\em{(ii)}} a more expensive fine-tuning stage that enables the proposed architecture to reach a suitable test accuracy. Ultimately, we have assessed the extremely good performances of our proposed strategy on a wealth of numerical test cases involving different physical models and operators.

Several working directions may stem from the present work. For instance, in order to enhance the extrapolation capabilities in the time domain, we may consider suitable advanced architectures, such as LSTM \cite{fresca2023long} or Neural ODEs \cite{chen2018neural}. 
Moreover, in this work we focused on POD-based architectures to generate the linear trial manifold during the pre-training phase, whereas in the flourishing ROM literature other alternatives have emerged as well \cite{oleary2022}.
Nonetheless, we mention that recent developments in physics-informed machine learning approaches aim at proposing novel sampling techniques and hyperparameter tuning criteria \cite{daw2023mitigating,jin2021nsfnets,wang2022improved}, allowing for a more rapid convergence to suitable optima during the training phase. We believe that these new techniques will further enhance the training of PTPI-DL-ROMs and will enable us to tackle even more complex problems stemming from Applied Sciences and Engineering.

\section*{Acknowledgments}
The authors are members of the Gruppo Nazionale Calcolo Scientifico-Istituto Nazionale di Alta Matematica (GNCS-INdAM) and acknowledge the project “Dipartimento di Eccellenza” 2023-2027, funded by MUR, as well as the support of Fondazione Cariplo, Italy, Grant n. 2019-4608. AM acknowledges the PRIN 2022 Project “Numerical approximation of uncertainty quantification problems for PDEs by multi-fidelity methods (UQ-FLY)” (No. 202222PACR), funded by the European Union - NextGenerationEU, and the Project “Reduced Order Modeling and Deep Learning for the real- time approximation of PDEs (DREAM)” (Starting Grant No. FIS00003154), funded by the Italian Science Fund (FIS) - Ministero dell'Università e della Ricerca. AM and SF acknowledge the project FAIR (Future Artificial Intelligence Research), funded by the NextGenerationEU program within the PNRR-PE-AI scheme (M4C2, Investment 1.3, Line on Artificial Intelligence).

\appendix

\section{Additional proofs}

\subsection{Proof of Lemma \ref{lemma:pod-convergence}}
\begin{proof}
By definition, we have that
\begin{equation*}
    \int_{\mathcal{P}_{sup} \times \mathcal{T}_{sup}} \|\mathbf{u}_h(\muv, t) -  \mathbf{V}_{\infty}\mathbf{V}_{\infty}^T\mathbf{u}_h(\muv, t)\|^2  d(\muv,t) =  \sum_{k>N} \sigma_{k,\infty}^2,
\end{equation*}
so that
\begin{equation*}
\begin{aligned}
    &\mathcal{E}_{POD}^{gen}(\mathcal{P}_{sup} \times \mathcal{T}_{sup};N_s,N_t) = \\
    &\hspace{2cm}=\int_{\mathcal{P}_{sup} \times \mathcal{T}_{sup}} \|\mathbf{u}_h(\muv, t) - \mathbf{V}\mathbf{V}^T\mathbf{u}_h(\muv, t)\|^2 d(\muv,t) - \sum_{k>N} \sigma_k^2 + \sum_{k>N} \sigma_k^2 - \sum_{k>N} \sigma_{k,\infty}^2 \\
    &\hspace{2cm}\le  \biggl| \int_{\mathcal{P}_{sup} \times \mathcal{T}_{sup}} \|\mathbf{u}_h(\muv, t) - \mathbf{V}\mathbf{V}^T\mathbf{u}_h(\muv, t)\|^2 d(\muv,t) - \sum_{k>N} \sigma_k^2 \biggr| + \biggl|\sum_{k>N} \sigma_k^2 - \sum_{k>N} \sigma_{k,\infty}^2\biggr|.
\end{aligned}
\end{equation*}
We observe that, by definition of $\sum_{k>N} \sigma_k^2$, 
\begin{equation*}
\begin{aligned}
    &\biggl|\int_{\mathcal{P}_{sup} \times \mathcal{T}_{sup}} \|\mathbf{u}_h(\muv, t) - \mathbf{V}\mathbf{V}^T\mathbf{u}_h(\muv, t)\|^2  d(\muv,t) - \sum_{k>N} \sigma_k^2 \biggr| = \\
    &\hspace{2cm} = \biggl|\int_{\mathcal{P}_{sup} \times \mathcal{T}_{sup}} \|\mathbf{u}_h(\muv, t) - \mathbf{V}\mathbf{V}^T\mathbf{u}_h(\muv, t)\|^2  d(\muv,t) -  \\
    &\hspace{5cm} - \frac{|\mathcal{P}_{sup} \times \mathcal{T}_{sup}|}{N_sN_t} \sum_{(\muv,t) \in \mathsf{P}_{sup} \times \mathsf{T}_{sup}}\|\mathbf{u}_h(\muv,t) - \mathbf{V}\mathbf{V}^T\mathbf{u}_h(\muv,t)\|^2 \biggr| \rightarrow 0
\end{aligned}
\end{equation*}
as $N_s, N_t \rightarrow \infty$, by choosing an adequate sampling strategy for $\mathsf{P}_{sup} \times \mathsf{T}_{sup}$ (for instance, $\muv \sim \mathcal{U}(\mathcal{P}_{sup})$ and $t \in \{k\mathcal{T}_{sup}/N_t\}_{k=1}^{N_t}$: in this case the convergence is a.s., see Proposition 1 of \cite{brivio2023error}). 
Then, thanks again to the results of Section 2 of \cite{brivio2023error}, we have that:
\begin{equation*}
    \sum_{k>N} \sigma_k^2 \xrightarrow{a.s.} \sum_{k>N} \sigma_{k,\infty}^2, \qquad N_s, N_t \rightarrow \infty,
\end{equation*}
which concludes the proof.
\end{proof}

\bibliographystyle{plain} 
\bibliography{biblio}


\end{document}